\documentclass{amsart}
\usepackage{amsmath,amssymb,amsthm,url,graphicx,xcolor,cite,amsrefs}
\usepackage{hyperref} 
    \hypersetup{colorlinks=true, linkcolor=blue, citecolor=magenta, filecolor=magenta, urlcolor=cyan}
\usepackage{mathtools} 
    \mathtoolsset{showonlyrefs,showmanualtags}
\allowdisplaybreaks
\setcounter{tocdepth}{3} 
    \makeatletter
    \def\l@subsection{\@tocline{2}{0pt}{2.5pc}{5pc}{}}
    \def\l@subsubsection{\@tocline{2}{0pt}{5pc}{7.5pc}{}}
    \makeatother
\numberwithin{equation}{section}
\newtheorem{thm}[equation]{Theorem}
\newtheorem{lemma}[equation]{Lemma}
\newtheorem{prop}[equation]{Proposition}

\theoremstyle{definition}
\newtheorem{definition}[equation]{Definition}
\theoremstyle{remark}
\newtheorem{remark}[equation]{Remark}
\DeclareMathOperator{\lcm}{lcm}
\newcommand{\Z}{\mathbb Z}

\newcommand{\A}{\mathbb A}

\newcommand{\T}{\mathbb T}
\newcommand{\F}{\mathcal F}

\begin{document}

\title[Inequalities for Primes in Progressions] {Improving and Maximal Inequalities for Primes in Progressions}

\author[Giannitsi]{Christina Giannitsi}
    \address{School of Mathematics, Georgia Institute of Technology, Atlanta GA 30332, USA}
    \email{cgiannitsi3@math.gatech.edu}
\author[Lacey]{Michael T. Lacey}   
    \address{ School of Mathematics, Georgia Institute of Technology, Atlanta GA 30332, USA}
    \email {lacey@math.gatech.edu}
    \thanks{MTL: The author is a 2020 Simons Fellow. \\ Research of all authors is supported in part by grant  from the US National Science Foundation, DMS-1949206}
\author[Mousavi]{Hamed Mousavi}
    \address{School of Mathematics, Georgia Institute of Technology, Atlanta GA 30332, USA}
    \email{hmousavi6@gatech.edu}
\author[Rahimi]{Yaghoub Rahimi}
    \address{School of Mathematics, Georgia Institute of Technology, Atlanta GA 30332, USA}
    \email{yaghoub.rahimi@gatech.edu}

\begin{abstract}
Assume that $ y < N$  are integers, and that $ (b,y) =1$.
Define an average along the primes in a progression of spacing $ y$,  given by integer  $ (b,y)=1 $.
\begin{align*}
A_{N,y,b} := \frac{\phi (y)}{N} \sum _{\substack{n <N\\n\equiv b\mod{y}}} \Lambda (n) f(x-n)
\end{align*}
Above, $\Lambda $ is the von Mangoldt function and $\phi $ is the totient function.
We establish improving and maximal inequalities for these averages.  These bounds are uniform in the choice of progression.
For instance,
 for $ 1< r <  \infty $  there is an integer $N _{y, r}$ so that for $(b,y)=1$, we have 
\begin{align*}
\lVert \sup _{N>N _{y,r}} \lvert A_{N,y,b} f \rvert \rVert_{r}\ll
\lVert  f\rVert_{r}.
\end{align*}
The implied constant is only a function of $ r$.
The uniformity over progressions imposes several novel elements on the proof.
\end{abstract}

\maketitle
\tableofcontents

\section{Introduction}

We study averages over primes in arithmetic progressions, establishing inequalities with constants  independent of the choice of progression.
As far as we know, these are new.  And the underlying proof entails some new complications, as compared to known results and their proofs.
The averages we are concerned with are defined as follows.
For $b, y \in \mathbb N $ and $y \leq N$,  and function $ f \;:\; \mathbb Z \to \mathbb R $, define
\begin{equation*}
	A_{N,y,b} f := \frac{ \phi (y)}{N} \sum _{\substack{n <N\\n\equiv b\mod{y}}} \Lambda (n) f(x-n),
\end{equation*}
where $\Lambda$ is the von Mangoldt function and $ \phi $ is the Euler totient function.
This is the average of $ f$ along the primes in the arithmetic progression  $ \{  n \;:\; n \equiv b \mod y\}$. We are only interested in the case of $ (b,y)=1$ of course, hence  we use the totient function $ \phi (y)$ above.

 As our first result, we establish $ \ell ^{r}$ improving type inequalities.

\begin{thm}\label{fixedscalelpimproving}
	For $r \in (1,2)$, there exists $C_r>0$, so that for all integers $ y$, there is a $N _{r,y}>0$ such that for all $N>N _{r,y}$ and compactly supported function $f$,
	\begin{align} \label{e:fixedscalelrimrroving}
 \max _{ (b,y)=1} \lVert  A_{N,y,b}f\rVert _{\ell ^{r'}} \leq C_r \left(\frac{y}{N}\right)^{\frac{1}{r}-\frac{1}{r'}} \,  \lVert f\rVert _{\ell ^{r}}.
	\end{align}
  Above, we set $ N _{r,y} = e ^{C_{r, \delta  } y ^{ \delta  }}$,
 for any $ \delta >0$, and $C _{r, \delta }$ sufficiently large.
\end{thm}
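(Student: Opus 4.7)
The plan is a two-point interpolation argument after subtracting a natural ``main term''. Write $A_{N,y,b} = \tilde A_{N,y,b} + E_{N,y,b}$, where
\[
\tilde A_{N,y,b} f(x) := \frac{y}{N}\sum_{\substack{n<N\\ n\equiv b\pmod{y}}} f(x-n)
\]
is the box average over the arithmetic progression and $E_{N,y,b}$ the resulting error. The main-term piece is handled immediately: the convolution kernel of $\tilde A_{N,y,b}$ has $\ell^1$-norm $1$ and $\ell^\infty$-norm $y/N$, hence $\ell^q$-norm $(y/N)^{1-1/q}$. Young's inequality with the exponent $1/q=2-2/r$ dictated by $\ell^r\to\ell^{r'}$ then yields $\lVert\tilde A_{N,y,b} f\rVert_{\ell^{r'}}\leq (y/N)^{2/r-1}\lVert f\rVert_{\ell^r}$ uniformly in $b$.

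The substantive work is a matching bound for $E_{N,y,b}$, which I would obtain by Riesz--Thorin interpolation between a crude $\ell^1\to\ell^\infty$ bound and a delicate $\ell^2\to\ell^2$ bound. The $\ell^1\to\ell^\infty$ bound is just the sup-norm of the kernel, namely $O(y(\log N)/N)$ via $\Lambda(n)\leq \log N$. For the $\ell^2\to\ell^2$ bound, apply Plancherel on $\T$: one needs uniform smallness of the Fourier multiplier symbol $\widehat{E}_{N,y,b}(\xi)$. Expanding $\mathbf{1}_{n\equiv b\pmod{y}}$ through the orthogonality of Dirichlet characters mod $y$, the symbol of $A_{N,y,b}$ becomes
\[
\frac{1}{N}\sum_{\chi\bmod{y}}\overline{\chi(b)}\sum_{n<N}\chi(n)\Lambda(n)e^{2\pi i n\xi} + O\!\left(\tfrac{y\log^2 N}{N}\right),
\]
the $O$-term absorbing the prime-power contribution from primes dividing $y$. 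The Fourier symbol of $\tilde A_{N,y,b}$ cancels exactly the principal-character main-term piece peaked at $\xi=0$. The remaining twisted exponential sums are $O(N\exp(-c\sqrt{\log N}))$ uniformly in $\chi\bmod y$, $(b,y)=1$, and $\xi\in\T$ by Siegel--Walfisz (for non-principal $\chi$) combined with a Vinogradov-type bound on $\sum_{n<N}\Lambda(n)e^{2\pi in\xi}$ (for the remaining $\chi_0$ contribution off $\xi=0$), provided $y\leq(\log N)^A$. The hypothesis $N\geq \exp(C_{r,\delta}y^\delta)$ is precisely this regime, so summing over the $\phi(y)$ characters yields $\lVert E_{N,y,b}\rVert_{\ell^2\to\ell^2}\ll \exp(-c'\sqrt{\log N})$.

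Riesz--Thorin between the two bounds then gives
\[
\lVert E_{N,y,b} f\rVert_{\ell^{r'}} \ll \left(\tfrac{y\log N}{N}\right)^{2/r-1}\exp\!\left(-c'(2-2/r)\sqrt{\log N}\right)\lVert f\rVert_{\ell^r},
\]
and the superfluous $(\log N)^{2/r-1}$ factor is swallowed by the exponential decay once $N$ exceeds the threshold $N_{r,y}$, recovering the desired constant $(y/N)^{2/r-1}$ for $E_{N,y,b}$ as well.

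The principal obstacle is uniformity over the residue class $b$ with $(b,y)=1$, which prohibits averaging over APs and forces working with the full family of Dirichlet characters modulo $y$, with Siegel--Walfisz applied uniformly over every $\chi\bmod y$. The possibility of a Landau--Siegel zero is precisely what renders the constant $C_{r,\delta}$ in $N_{r,y}=\exp(C_{r,\delta}y^\delta)$ ineffective, consistent with the hypothesis of the theorem.
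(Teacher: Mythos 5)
The core issue is the claimed bound $\lVert E_{N,y,b}\rVert_{\ell^2\to\ell^2}\ll\exp(-c'\sqrt{\log N})$, which is false. Subtracting the box average $\tilde A_{N,y,b}$ only cancels the Fourier peak of $\widehat A_{N,y,b}$ near $\xi=0$ (and, by a matching computation, near rationals $a/q$ with $q\mid y$); the peaks at all \emph{other} rationals persist. At $\xi$ near $a/q$ with $(q,y)=1$ and $q>1$ small, Lemma~\ref{l:Approx} together with Lemma~\ref{l:zero} gives $\widehat A_{N,y,b}(a/q)\approx\mu(q)/\phi(q)$, whereas $\widehat{\tilde A}_{N,y,b}(a/q)\ll yq/N$ since $\lVert ya/q\rVert\geq 1/q$; so for example $\lvert\widehat E_{N,y,b}(1/3)\rvert\approx 1/2$ whenever $3\nmid y$, and the $\ell^2$ operator norm of $E_{N,y,b}$ stays bounded below, not exponentially small. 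Your character expansion correctly shows that the non-principal $\chi$ contributions are small uniformly in $\xi$ --- their major-arc main terms carry a factor $\sum_b\chi(b)=0$ --- but the principal-character piece $\sum_{n<N}\Lambda(n)e(n\xi)$ is $O(N\exp(-c\sqrt{\log N}))$ only on \emph{minor arcs}. Vinogradov or Balog--Perelli gives nothing of the sort near a rational $a/q$ of small denominator $q>1$, where $\sum_{n<N}\Lambda(n)e(na/q)\sim\mu(q)N/\phi(q)$.

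There is a secondary obstruction even if you settle for a bounded (rather than small) $\ell^2\to\ell^2$ estimate: the $\ell^1\to\ell^\infty$ kernel bound is $y\log N/N$, so Riesz--Thorin between $(1,\infty)$ and $(2,2)$ produces $(y\log N/N)^{2/r-1}$, off by a $(\log N)^{2/r-1}$ factor from the sharp scale $(y/N)^{1/r-1/r'}$ the theorem demands. The improving inequality is critical, and a single interpolation with a flat $\ell^2$ bound cannot remove that logarithm. This is precisely why the paper uses the circle-method approximation $\widehat A_{N,y,b}\approx\sum_{a/q}\widehat L_{N,y}^{a,q}$ of Theorem~\ref{t:appthm}, splits into High and Low parts according to the Ramanujan height $h_y(q)$ (Definition~\ref{d:height}), estimates the High part in $\ell^2$ via Plancherel and the Gauss-sum decay \eqref{e:htUp}, estimates the Low part in $\ell^\infty$ via the inverted Ramanujan-sum kernel (Lemma~\ref{inversefourier}) together with the progression Bourgain--Ramanujan bound (Lemma~\ref{l:BourgainRamanujanProgression}), and then optimizes the cutoff $Q$ in a bilinear form against indicator functions. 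Without some analogue of this rational-point-by-rational-point treatment, the argument cannot close.
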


The right hand side of \eqref{e:fixedscalelrimrroving} is the correct scale factor for the inequality to hold uniformly in $ N$.
And, it is sharpest when $f$ is assumed to be supported on a progression of spacing $y$.
It is natural to suppose that $N$ is sufficiently large, as a function of $y$.
For the average over all primes, this inequality was established in \cite{laceyprimes}, with study of the endpoint case in \cite{lacey2021endpoint}.
The novelty here is the uniformity in choice of arithmetic progression.

We also study the maximal inequality.

\begin{thm}\label{t:maximal}  For $ 1 < r < \infty $, there is a constant $ C_r$ so that for all  integers $ y$,  there is a $N _{r,y}>0$ so that
\begin{equation}\label{e:maximal}
 \lVert  \sup _{N> N _{r,y}} \lvert  A _{N,y,b} f \rvert  \rVert _{\ell ^r}
\leq C_r   \lVert f\rVert _{\ell^r}
\end{equation}
	The inequality above is uniform in $ y$ and $ (b,y)=1$.
\end{thm}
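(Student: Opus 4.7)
The plan is to establish \eqref{e:maximal} by combining an $\ell^2$ maximal bound, obtained via a Hardy--Littlewood circle method analysis, with interpolation against the improving estimate of Theorem \ref{fixedscalelpimproving}. The starting point is the character decomposition
\begin{equation*}
A_{N,y,b} f(x) = \frac{1}{N}\sum_{\chi \bmod y}\bar\chi(b)\sum_{n<N}\chi(n)\Lambda(n) f(x-n),
\end{equation*}
which exhibits the Fourier multiplier $m_{N,y,b}(\xi)$ of $A_{N,y,b}$ as a weighted combination of twisted exponential sums over primes, one per Dirichlet character modulo $y$.

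For the $\ell^2$ analysis I would decompose frequency space into major arcs, neighborhoods of fractions $a/(qy)$ with $q$ no larger than a polylogarithmic cutoff, and the complementary minor arcs. On the major arcs each twisted sum is approximated, via Siegel--Walfisz, by a product of a smooth cutoff multiplier and an arithmetic factor of Gauss sum / singular series type that is uniformly bounded in $y$; on the minor arcs one applies Vinogradov--Vaughan bilinear estimates, made uniform in the modulus. Restricting to dyadic $N = 2^k$ with $2^k > e^{Cy^\delta}$, which is exactly the regime where Siegel--Walfisz errors are acceptable, the telescoping differences $A_{2^{k+1},y,b}f - A_{2^k,y,b}f$ admit a square function estimate in $\ell^2$ by Plancherel, giving the lacunary maximal bound with constant independent of $y$ and $b$.

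To pass from dyadic to the full supremum, the difference $A_{N,y,b}f - A_{2^k,y,b}f$ for $N \in [2^k,2^{k+1})$ is a short average that can be absorbed using the improving inequality of Theorem \ref{fixedscalelpimproving}. For $r > 2$ the desired bound follows by interpolating the resulting $\ell^2$ maximal estimate against the trivial $\ell^\infty$ bound, which is a consequence of the prime number theorem in progressions. For $r \in (1,2)$ I would run a parallel argument, either using an $r$-variation version of the square function to interpolate against a weak-type endpoint, or directly combining the $\ell^2$ maximal bound with the $\ell^r \to \ell^{r'}$ decay provided by Theorem \ref{fixedscalelpimproving}, which plays the role of a quantitative $\ell^r$ smoothing estimate.

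The central obstacle is maintaining uniformity in the modulus $y$. The Siegel--Walfisz constants are ineffective in $y$, and this is precisely what forces the threshold $N_{r,y}$ to have the shape $e^{Cy^\delta}$. Every ingredient, from the minor arc bound to the singular series estimate to the absorption of the non-dyadic remainders, must be rerun with quantitative control in the conductor $y$; one cannot simply black-box the classical prime-average maximal theory, and reconciling the ineffective Siegel--Walfisz regime with the need for a scale-invariant square function bound is the main technical challenge.
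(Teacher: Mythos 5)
Your outline correctly identifies the circle method, Siegel--Walfisz, major/minor arc splitting, and interpolation against the improving inequality as ingredients, but it skips over the single issue that makes the uniformity-in-$y$ nontrivial, and that issue cannot be closed with the argument as written.

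The gap is in the step ``the telescoping differences $A_{2^{k+1},y,b}f - A_{2^k,y,b}f$ admit a square function estimate in $\ell^2$ by Plancherel, giving the lacunary maximal bound with constant independent of $y$ and $b$.'' A Plancherel square function only controls the minor-arc error and the high-frequency tail, where there is decay in $k$. It does not control the main term, which at the approximation level is a sum over rationals $a/q$ of Gauss-sum-weighted bumps $\Upsilon(a,q)\,\widehat M_{N/\ell}(\ell(\xi-a/q))\,\widehat\eta_{\ell^2}(\xi-a/q)$, each of which has $\ell^2\to\ell^2$ operator norm comparable to $1$ and does not decay in $N$. To handle the supremum over $N$ of that multifrequency object one needs a maximal inequality. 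The standard tool is the Bourgain Multifrequency Maximal Inequality, whose constant is logarithmic in the number of frequencies. Here is the trap you do not confront: because of the progression, the relevant notion of ``height'' is $h_y(q) = \lcm(y,q)/y$ rather than $q$, and the paper computes in \eqref{e:lotsOfRationals} that the number of rationals $a/q$ with $h_y(q) = r$ is $\phi(r)\,y/(y,r)$, which can be of order $y$. In particular there are more than $y$ rationals whose Gauss sum $\Upsilon(a,q)$ has magnitude one. So the Bourgain inequality would give a constant growing like $\log y$, which is precisely what Theorem \ref{t:maximal} forbids. The paper circumvents this with a different multifrequency estimate, Lemma \ref{l:notBourgain} (taken from \cite{MR1441675}), which requires the frequency points to share a common denominator and the scales $N$ to exceed a power of that denominator, in exchange for a constant independent of the number of points; a separate fixed-scale $\ell^2$ square-function bound handles the finitely many remaining small scales. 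Your proposal has no substitute for this, and a direct adaptation of Bourgain's lemma or Wierdl's proof would fail to be uniform in $y$.

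A secondary point: your phrase ``an arithmetic factor of Gauss sum / singular series type that is uniformly bounded in $y$'' is accurate for individual factors but conflates boundedness with the harder quantity that actually matters, which is the size of their $\ell^\infty$ envelope summed over rationals of a fixed height; the paper's Definition \ref{d:height} and Lemma \ref{l:zero} exist precisely to quantify this decoupling of ``size of denominator'' from ``size of Gauss sum.'' Also, for $1<r<2$ the paper does not interpolate against an $\ell^\infty$ bound or invoke an $r$-variation theorem; it proves a restricted weak-type estimate directly, using a $\lambda$-dependent cutoff $Q\simeq\lambda^{-1+r/2}$ in the High/Low split, with the Low part estimated via the progression Cohen identity (Lemma \ref{l:cohenprogression}), the progression Bourgain bound for Ramanujan sums (Lemma \ref{l:BourgainRamanujanProgression}), and a Hardy--Littlewood maximal function, and the High part via Lemma \ref{l:notBourgain}. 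Your second interpolation alternative is closest in spirit, but without the common-denominator multifrequency lemma and the progression Cohen/Ramanujan machinery, the constants leak a dependence on $y$.
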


We prove these theorems using the Siegel-Walfisz Theorem, and methods that are common to the study of these averages
and their $ \ell ^{r}$ improving and sparse bounds.
The bounds from the  Siegel-Walfisz Theorem are ineffective. 
So, our bounds are also  ineffective.

The uniformity over the progressions introduces important differences with
prior papers studying averages over the primes. We describe them here.

The Hardy-Littlewood circle method is key.  The decomposition of the
Fourier transform of the averages
leads to two competing sets of properties.
The first, is the \emph{height} of rational points in the circle.
This property was identified by Bourgain \cite{MR1019960}, and
refined by Ionescu and Wainger \cite{MR2188130}.
Its role is well understood.

This height is, for our purposes, dictated by the
size of Gauss sums associated to the rational.
Most commonly, this height is  given by the denominator of the rational
point in its lowest terms.
In our setting, \emph{these are decoupled}.  Rational points whose denominator divides $ y$ all have Gauss sums of magnitude one.
Specializing the discussion to the primes, the Gauss sum associated with rational $a/q$ in lowest terms, is $ \mu (q)/ \phi (q)$.
In our setting, the Gauss sums are given by a Ramanujan type sum along a progression.
These are  evaluated in Lemma~\ref{l:ZERO}.
And, the  height of $ a/q$ is given by $ \operatorname {lcm} (q,y)/y$.  In particular, there are more than $y$ rational points of height one.

This is a novel feature, and once identified, only adds a little extra difficulty to the
proof of the improving inequality.
In particular, the formulation of the Fourier multiplier approximation theorem, Theorem \ref{t:appthm},
 is different from standard statements of this type.
 For the maximal inequality, however,
one cannot use the standard approach.
The latter approach uses the
Bourgain Multifrequency Maximal Inequality  \cite{MR1019960}. It has a bound that is logarithmic in the number of rationals of a given height.
And so, we cannot appeal to it.
  We use a different inequality at this point. See Lemma \ref{l:notBourgain}.
 Also note that the large number of points of height one would complicate applications of the Ionescu-Wainger theory,
 in seeking $\ell^p$ estimates. But we do not need to confront them, due to our approach to the improving inequalities.

The improving inequalities require a second property, call it a \emph{Ramanujan height}.
It depends upon subtle cancellation and size conditions on certain Ramanujan's sums. Again, there is a complication
in evaluating these sums, and we need a progression variant of a familiar identity due to Cohen,
see Lemma~\ref{l:cohenprogression}.   Applying this identity is not so straightforward.
An inverse Fourier transform calculation, easy in the case of the full sequence of primes, becomes much more involved. See Lemma \ref{inversefourier}.

In addition, one needs to know that Ramanujan's sums are typically of size one.  This is quantified in a famous inequality due to Bourgain, stated in Lemma \ref{l:BourgainRamanujan}.  Again, we need a progression
version, stated in Lemma \ref{l:BourgainRamanujanProgression}.

\smallskip
Bourgain \cites{MR937582,MR937581} initiated the study of these discrete averages, with the $\ell^2$ result for the
square integers being an important breakthrough. The first example of an arithmetic sequence
for which the full $\ell^p$ inequalities were known is Wierdl's result for  the primes \cite{MR995574}.
See Mirek and Trojan \cite{MR3370012} for a discussion of this proof.
Averages along the primes, and closely related objects, have been studied by many,
including variational results by \cite{MR4029173}, thin subsets of the primes \cite{MR3299842},
Carleson type theorems \cite{MR3830238}, and endpoint type results \cites{MR4029173,lacey2021endpoint}.
This is the only paper we are aware of that discusses the uniformity over progressions.

\smallskip

The remainder of the paper begins with \S \ref{s:Prelim}, where some notation
and standard facts are collected.  This section also has the crucial progression
variants of some standard facts about Ramanujan's sums. These facts are probably
known, but we could not find relevant sources to cite, so we include complete proofs
for these facts.  The remaining sections develop the tools along standard lines,
while addressing the complications from the decoupling of the size of the
Gauss sum at rational $a/q$ and $q$ mentioned above.
The circle method is used to build approximation to the multipliers in \S \ref{s:Approx}.
There are differences in the standard approaches here, accounting for the
fact that the different role that height plays in this argument. See Definition \ref{d:height}.
The following  section \S \ref{s:HighLow} develops the
properties of the High and Low decomposition of the multipliers. These definitions
are not completely standard.
The analysis of the Low part depends very much on the progression versions
of the Ramanujan multipliers. The Bourgain Multifrequency Maximal Inequality
cannot be used for the High part.  The concluding section \S \ref{s:proof} is
standard in nature.

\section{Preliminaries} \label{s:Prelim}

For quantities $a$ and $b$, we write $a \ll b$ if $ \lvert  a\rvert  \leq C \, b$ for some constant $C>0$. We  write $a \ll _p b$ if they implied constant depends on $p$.

For a function $f$ on the integers,   $\widehat f$ or $\F f$ denotes the discrete time Fourier transform of $f$, defined as
\begin{equation*}
\F f (\theta) = \sum _{x \in \Z} f(k) e^{-2 \pi i x \theta} ,
\end{equation*}
and $\check f$ or $\F ^{-1}$ the inverse discrete time Fourier transform,
\begin{equation*}
\F^{-1}  f (x) = \int_0^1 \widehat f (\theta) e^{2 \pi i x \theta} \, d \theta.
\end{equation*}
Finally, let $e(x) := e^{2 \pi i x}$.

Let $\Psi$ denote the Chebyshev function,  which counts the primes in a progression.
\begin{equation*}
\Psi (N,y,b) = \sum _{\substack{n<N\\ n\equiv b\mod{y}}} \Lambda (N).
\end{equation*}
The fundamental estimate on it is given here, requiring  that the average be sufficiently large, depending upon $y$ and the $ \ell ^r$ index of the inequalities.

\begin{thm}\label{t:SW}[Siegel-Walfisz Theorem]
Let $ J>1$ be an integer. This holds for all $ x>1$,    $ y \leq (\log N) ^{J}$ and $ b \mod q$.
\begin{equation}\label{e:SW}
\Bigl\lvert
\Psi (x,y,b) - \frac{x} {\phi (y)}
\Bigr\rvert \leq C_J \cdot  x \operatorname {exp}(-c_J  \sqrt{\log x}    ) ,
\end{equation}
where the constants $ C_J$ and $ c_J$ depend only on $ J$.
\end{thm}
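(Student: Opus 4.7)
The plan is to follow the classical derivation of Siegel-Walfisz from the theory of Dirichlet $L$-functions, since the statement here is the textbook form of the theorem (the uniformity range $y \leq (\log N)^J$ is what necessitates the ineffective Siegel bound). First I would use the orthogonality of Dirichlet characters modulo $y$ to detect the congruence condition: for $(b,y)=1$,
\begin{equation*}
\Psi(x,y,b) = \frac{1}{\phi(y)} \sum_{\chi \bmod y} \overline{\chi}(b) \, \psi(x,\chi), \qquad \psi(x,\chi) := \sum_{n\leq x} \Lambda(n) \chi(n).
\end{equation*}
The principal character $\chi_0$ contributes $\psi(x,\chi_0) = x + O(x \exp(-c\sqrt{\log x})) + O(\log x \log y)$, where the first error comes from the prime number theorem and the second accounts for the exclusion of prime powers of primes dividing $y$. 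Dividing by $\phi(y)$ reproduces the main term $x/\phi(y)$ with acceptable error in the stated range.

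The core of the argument is to bound $\psi(x,\chi)$ for each non-principal $\chi \bmod y$. The standard route is the explicit formula
\begin{equation*}
\psi(x,\chi) = -\sum_{\rho} \frac{x^\rho}{\rho} + (\text{lower order}),
\end{equation*}
where the sum is over non-trivial zeros $\rho$ of $L(s,\chi)$ inside a large rectangle. Combining this with the standard Vinogradov-type zero-free region
\begin{equation*}
\sigma > 1 - \frac{c}{\log(y(|t|+2))}
\end{equation*}
(for complex $\chi$, and for real $\chi$ away from a possible exceptional real zero $\beta_\chi$) gives, by a routine contour argument and summation of zeros in dyadic horizontal strips, the bound $\psi(x,\chi) \ll x \exp(-c\sqrt{\log x})$ provided $y \leq \exp(c'\sqrt{\log x})$, which comfortably contains the range $y \leq (\log x)^J$.

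The main obstacle, and the source of the ineffectivity, is the possible Siegel zero $\beta_\chi$ of a real non-principal character $\chi$. To handle it, I would invoke Siegel's theorem: for every $\varepsilon > 0$ there exists $C(\varepsilon) > 0$, not effectively computable, such that $\beta_\chi < 1 - C(\varepsilon) y^{-\varepsilon}$ for every real non-principal $\chi \bmod y$. Choosing $\varepsilon$ small in terms of $J$ (e.g. $\varepsilon = 1/(2J)$) converts the Siegel estimate into the needed saving $x^{\beta_\chi}/\beta_\chi \ll x \exp(-c_J \sqrt{\log x})$ whenever $y \leq (\log x)^J$. Summing the character contributions, multiplying through by $\overline{\chi}(b)/\phi(y)$, and applying the trivial bound $|\chi(b)|\leq 1$ together with the counting $\sum_{\chi \bmod y} 1 = \phi(y)$ absorbs the factor of $\phi(y)$ and yields the stated estimate \eqref{e:SW}. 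In short, the approach is orthogonality, explicit formula with zero-free region, and Siegel's ineffective bound for the exceptional zero; the last ingredient is exactly what forces $C_J$ and $c_J$ to be ineffective, as flagged in the paper.
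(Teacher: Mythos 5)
The paper does not prove Theorem \ref{t:SW}; it is imported as a classical black box, and the introduction's remark that ``the bounds from the Siegel--Walfisz Theorem are ineffective'' is simply flagging this external input. There is therefore no in-paper proof to compare your argument against. Your sketch --- orthogonality of Dirichlet characters mod $y$, the explicit formula and the Vinogradov-type zero-free region for non-exceptional zeros, and Siegel's ineffective bound $\beta_\chi < 1 - C(\varepsilon)y^{-\varepsilon}$ with $\varepsilon$ chosen small relative to $J$ to cover the range $y \leq (\log x)^J$ --- is the standard textbook derivation (Davenport, Montgomery--Vaughan) and is correct, including the observation that the $\phi(y)$ in the orthogonality denominator cancels against the count of characters so that no extra $\phi(y)$ loss appears in the final error term. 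One small point worth recording: the paper's statement contains typos, $y \leq (\log N)^J$ should read $y \leq (\log x)^J$, and ``$b \bmod q$'' should be ``$b \bmod y$ with $(b,y)=1$''; your argument implicitly uses the corrected version.
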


Throughout, we denote $ \mathbb A  _q = \{ a \in \mathbb Z /q \mathbb Z  \;:\; (a,q)=1\}$, so that $ \lvert  \mathbb A  _q\rvert = \phi (q) $, the totient function.    This lower bound on the totient function is well known.  For all $ 0< \epsilon <1$, we have
\begin{equation}\label{e:totient}
\phi (q) \gg q ^{1- \epsilon }.
\end{equation}
We also make use of the major and minor arc decomposition. For integers $q, \, s \geq 1$ consider the following sets
\begin{equation*}
\mathcal R _s = \left\lbrace \frac{a}{q} \in [0,1) \, : a \in \A_q, \;  \;  2^{s-1} \leq  q < 2^{s} \right\rbrace
\end{equation*}
For $0<\varepsilon \leq 1/4$ and  $\frac{a}{q} \in \mathcal R _s$, with $s\leq j \varepsilon$,  we define the j-th major arc at $a/q$ as
\begin{align*}
	\mathfrak{M} _j \left( a/q \right)  
	    & : = \left (\frac{a}{q} - 2^{(\varepsilon -2) j}, \frac{a}{q} + 2^{(\varepsilon -2) j} \right),
\end{align*}
which are disjoint for $\varepsilon$ small enough.
The $j$-th major arcs are given by 
$\mathfrak{M} _j := \bigcup_{\frac{a}{q} \in \mathcal R _s} \mathfrak{M} _j (\frac{a}{q}) $. 
We define the $j$-th minor arcs $\mathfrak{m}_j$ as the complement of $\mathfrak{M}_j$.

We turn to exponential and Ramanujan's sums.
  Define  Ramanujan's sums by
\begin{equation} \label{e:tau}
\tau_q (x)  =  \sum _{a \in \A_q}  e (ax/q).
\end{equation}
Cancellative properties of the Ramanujan's sums are very important for us, and expressed in different ways.
The first of these is
\begin{equation}\label{e:mobius}
\tau _{q} (x)  =  \mu (q) \qquad  (q,x)=1.
\end{equation}
Above, $ \mu  $ is the M\"obius function, the  multiplicative function with $ \mu (p) =-1$ for all primes $p$, that vanishes on integers that are not square free.
A second example of the cancellative properties is
\begin{align} \label{e:smooth}
\sum_{d|r} \tau_d(x) = \begin{cases}
r & \textup{ if } r|x\\
0 & \textup{otherwise}.
\end{cases}
\end{align}
We mention the next cancellation property known as Cohen's identity
\begin{equation}\label{eq:Cohen}
\sum_{\substack{r<q\\  (r,q)=1}} \tau_q(x+r) = \mu(q) \tau_q(-x).
\end{equation}
Their relationship to the prime numbers are well known.
In this study, we will need
 these properties,  as well as certain progression versions of them.

Firstly, we examine  Ramanujan's sum restricted to a progression. This formula must be known, but we were not able to find it in the literature.

\begin{lemma}\label{l:ZERO}
Let $ q, y, b \in \mathbb N $, with $ g = \gcd(q,y)$, $( a,q)=1$,  $(b,g) = 1$.  If $ (g,q/g)=1$, let $1 - g\bar{g}=\frac{q}{g}t $, where $  \overline {g} $ is the multiplicative inverse of $ g$ mod $ q/g$.
Then,
\begin{align} \label{e:ZERO}
 \sum _{\substack{r\in \mathbb A _q\\ r\equiv b\mod{g}}} 
\operatorname e \big( \frac{ra}{q} \big)   =
\begin{cases}
    0   &   \textup{if } 1<g<q \textup{ and }  \big( g, \frac{q}{g} \big) >1
    \\
    \mu \big( \frac{q}{g} \big) \operatorname e \big( \frac{abt}{g} \big)   &   \textup{if } 1\leq g<q \textup{ and }  \big( g, \frac{q}{g} \big) =1
    \\
    e\big( \frac{ab}{q} \big) & \textup{if } g=q .
\end{cases}
\end{align}
\end{lemma}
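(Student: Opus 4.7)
The plan is to treat the trivial case $g = q$ separately, and for $g < q$ to parametrize the summation variable and apply Möbius inversion. When $g = q$, the congruence $r \equiv b \mod{q}$ uniquely determines a representative in $\mathbb A _q$ (namely $b \mod q$, which lies in $\mathbb A _q$ because $(b,q) = 1$), giving the third case of \eqref{e:ZERO}. For $g < q$, set $m := q/g$ and write $r = b + gs$ as $s$ ranges over $\mathbb Z / m \mathbb Z$; because $(b,g) = 1$, the condition $(r,q) = 1$ is automatic at the prime factors of $g$ and reduces to $(b+gs, m) = 1$. Factoring out $e(ba/q)$ turns the sum into
\[
e(ba/q) \sum_{\substack{s \mod{m} \\ (b+gs, m) = 1}} e(sa/m).
\]

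Applying Möbius inversion to the coprimality indicator and swapping the order of summation reduces matters to evaluating, for each $d \mid m$, the inner sum
\[
\sum_{\substack{s \mod{m} \\ d \mid b + gs}} e(sa/m).
\]
The congruence $gs \equiv -b \mod{d}$ is solvable only when $\gcd(g,d) \mid b$; combined with $(b,g) = 1$ this forces $(g, d) = 1$, and then $s$ runs through an arithmetic progression mod $m$ of common difference $d$ and length $m/d$. The resulting geometric sum vanishes unless $m/d = 1$, since $(a, m/d) = 1$ is inherited from $(a, q) = 1$. Hence only $d = m$ can contribute, and this requires $(g, m) = 1$; when $(g, m) > 1$ every term in the Möbius expansion vanishes, yielding the first case.

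In the remaining case $(g, m) = 1$, the lone surviving contribution is $\mu(m) \, e(-\bar g b a / m)$, where $\bar g$ is the inverse of $g$ modulo $m$. Multiplying by the prefactor $e(ba/q)$ gives $\mu(m) \, e\bigl( ba(1 - g\bar g)/q \bigr)$, and inserting $1 - g\bar g = mt$ together with $m = q/g$ collapses the phase to $abt/g$, producing $\mu(q/g) \, e(abt/g)$ as claimed. The main obstacle is the careful bookkeeping inside the Möbius inversion, in particular the observation that the geometric sum is forced to vanish for every $d < m$ and that the surviving $d = m$ term is available only when $(g,m) = 1$; the final phase simplification is then a short calculation using $1 - g\bar g = mt$. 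As a sanity check, the edge case $g = 1$ falls into the second case with $t = 0$, recovering the familiar identity $\tau_q(a) = \mu(q)$ for $(a,q)=1$.
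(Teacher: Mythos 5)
Your proof is correct, and it arrives at the same mechanism as the paper's argument — after expansion, each piece reduces to a geometric sum over a progression, which vanishes unless the progression degenerates to a single term — but your bookkeeping is genuinely different. You substitute $r = b + gs$ at the start, so everything lives mod $m = q/g$, and then run Möbius inversion $\mathbf{1}_{(n,m)=1} = \sum_{d \mid (n,m)} \mu(d)$ on the coprimality indicator; the $\mu(d)$ coefficients and the restriction to squarefree divisors come for free, and the solvability constraint $\gcd(g,d)\mid b$ together with $(b,g)=1$ automatically kills the terms with $(g,d)>1$. The paper instead works with residues $r$ in $\mathbb Z_q$ directly, builds the explicit index set $U_g$ of squarefree proper divisors of $q$ coprime to $g$, and applies set-theoretic inclusion-exclusion on $A = T_g \setminus \bigcup_{u \in U_g} R_u$, producing the coefficient $(-1)^{\omega(u)} = \mu(u)$ by hand; it also has to treat the degenerate case $U_g = \emptyset$ separately, which your Möbius formulation absorbs uniformly (that case just means every $d>1$ has $(g,d)>1$). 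Both arguments isolate the same single surviving term ($d = m$ in yours, $gu = q$ in the paper's) and both finish with the phase simplification $e(ba/q)\,e(-b\bar g a/m) = e(abt/g)$ via $1 - g\bar g = (q/g)t$. Your version is slightly more streamlined and requires less case analysis; the paper's is more explicit about the combinatorial structure of the excluded set. One small point worth making explicit in your write-up: the reduction of the coprimality condition to $(b+gs,m)=1$ uses that at any prime $p \mid g$ one automatically has $b + gs \equiv b \not\equiv 0 \pmod p$, so the only live constraints come from primes of $q$ not dividing $g$, and these are exactly the primes of $m$ coprime to $g$ — primes dividing both $g$ and $m$ impose no condition but are harmlessly included in $(b+gs,m)=1$ since they are handled by the $p\mid g$ observation.
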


\begin{proof}
The case of $g=1,q$ are elementary, and we leave them to the reader. Below, we will assume that $g$ is a proper divisor of $q$.  
Let $ u$ be a  divisor of $ q$. We have
\begin{equation} \label{e:uprogression}
\sum_{j=0} ^{\frac{q}{u} -1} \operatorname e ( a(b+ju)/q)
 =
 \begin{cases}
0    &  u < q
\\
\operatorname e (ab/q)
& u=q
 \end{cases}
\end{equation}
In the case of $ u < q$, note that  since $ a\in \mathbb A _q$, we also have $ a\in \mathbb A _{q/u}$, hence $ j\to aj $ is a permutation on $ \mathbb Z/ (q/u) \mathbb Z $.  And, if $ u=q$, there is only a single term in the summation, so there is no cancellation.

For a set $ B\subset \mathbb Z _q$, set
\begin{equation} \label{e:SB}
S (B) = \sum_{ s \in B} \operatorname e (sa/q).
\end{equation}
We need to evaluate the term $S (A)$, where
 $ A  = \{ r \in \mathbb A_q \;:\;   r \equiv b\mod g\}$.
To do so, we use the Inclusion-Exclusion Principle to write $ S (A)$  as a sum of progressions, as in \eqref{e:uprogression}.

Consider the set
$ T_{u}  =  \{  b+j u \;:\; 0\leq j < q/u\} $, and note that \eqref{e:uprogression} is essentially an estimate of $S(T_g)$.
Now, suppose $ g$ is a proper divisor of $ q$. Then, for all prime factors $ p$ of $ g$, we have
\begin{equation*}
b+jg \equiv b \not\equiv 0 \mod p ,
\end{equation*}
since $ (b, g) = 1$.   That is, if $r\in  T_g  \setminus A$, \emph{it must be divided by a
prime factor of $ q$ that does not divide $ g$.}

Let $ U_g$ be all square free proper divisors of $ q $ that are relatively prime to $ g$.
If $ U_g= \emptyset $, that means that $ g$ and $ q$ are powers of the same prime $ p$.  Therefore, $ A = T_g$, since   for every $p|q$, we conclude that $p|g$, and
$ b+ j g \equiv b \not\equiv 0 \mod p$.  
So $S(A)=S(T_g)$ and our desired estimate follows from \eqref{e:uprogression}.

On the other hand, if  $ U_g \neq \emptyset $, consider $ u\in U _{g}$, and let $ r$ denote an integer $ r = b + jg \in T_g$.
We have
\begin{equation*}
b+jg \equiv 0 \mod u  \quad \textup{iff} \quad   j \equiv -b \overline {g} \mod u.
\end{equation*}
This holds since $ (g,u) =1$.   Set $ \beta _u = -b  \overline {g} \mod u$,  (we may have $ b \equiv 0 \mod u$ for some $ u\in U _{g}$,) and $ \beta _g = b$.
Let
\begin{equation*}
R_u  = \{ rg+b \in  \mathbb Z _q \;:\;  r\equiv \beta _u \mod u \},
\end{equation*}
and notice that we can then write $A$ as
\begin{equation*}
A = T_g  \setminus  \bigcup _{u\in U _{g}} R_u.
\end{equation*}

We can now utilize the Inclusion-Exclusion Principle.
Let $ \omega (n)$ be the number of distinct prime factors of $ n$.  Then
\begin{align*}
S (A)  & =S (T_g)+ \sum _{u\in U} (-1) ^{\omega (u)} S (R_u).
\end{align*}
The equation above implies that the desired sum in \eqref{e:ZERO} is a linear combination of other sums that can be expressed in the form of \eqref{e:uprogression}, and can be therefore estimated.
Additionally, \eqref{e:uprogression} forces a lot of the sums above to be zero. Specifically, all of them are zero except for when $ gu=q$. In that case, the progression  consists of a single term.
This forces $ q/g$ to be square free, since $ U_g$ consists of square free integers.
The corresponding coefficient from the Inclusion-Exclusion Principle is $ (-1) ^{\omega (q/g)} = \mu (q/g)$, which means that 
\begin{align*}
S (A)  & = \mu(q/g) S (T_{q/g})
\end{align*}
Recalling the definition of $\beta_{q/g}$, we see that $\beta_{q/g} = -b\bar{g} =\frac{-b}{g} (\frac{q}{g}t-1)$. So $r= b+g\bar{g}= b+\frac{q}{g}t-b=\frac{q}{g}t $. The result follows from   \eqref{e:uprogression}.
\end{proof}

Secondly, we need a progression version of Cohen's identity \eqref{eq:Cohen}.

\begin{lemma}\label{l:cohenprogression}
We have for $g=\gcd(y,q)$
\begin{align}\label{e:cohenprogression}
\sum_{\substack{t\in A_q\\ t\equiv b\mod{g}}} \tau_q(x+t)=
\begin{cases}
  0    &  (g,\frac{q}{g})>1
  \\
\mu (q/g) \tau_{q/g}(x)\tau_g(x+b) &  (g,\frac{q}{g})=1
\end{cases}
\end{align}
\end{lemma}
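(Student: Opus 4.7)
The plan is to expand $\tau_q(x+t)=\sum_{a\in A_q} e(a(x+t)/q)$, swap the order of summation, and apply Lemma~\ref{l:ZERO} to the inner sum. Since $g \mid y$, the standing assumption $(b,y)=1$ gives $(b,g)=1$, so the hypotheses of Lemma~\ref{l:ZERO} are met. Writing the LHS of \eqref{e:cohenprogression} as
\begin{equation*}
\sum_{a\in A_q} e(ax/q) \cdot \sum_{\substack{t\in A_q\\ t\equiv b\mod g}} e(at/q),
\end{equation*}
the case $(g,q/g)>1$ is immediate: the inner sum vanishes for every $a\in A_q$, so the whole sum is $0$.

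In the case $(g,q/g)=1$, Lemma~\ref{l:ZERO} gives inner sum $\mu(q/g)\, e(ab t^\ast/g)$, where $t^\ast$ is the integer determined by $1-g\bar g = (q/g)t^\ast$. Since $g\mid q$, we can rewrite $e(abt^\ast/g) = e(a\cdot b t^\ast (q/g)/q)$, so combining the two exponentials collapses to a single one and the sum becomes
\begin{equation*}
\mu(q/g)\sum_{a\in A_q} e\!\left(\tfrac{a\left(x + b t^\ast (q/g)\right)}{q}\right) \;=\; \mu(q/g)\, \tau_q\!\left(x + b t^\ast (q/g)\right).
\end{equation*}
It then remains to show that $\tau_q(x+bt^\ast(q/g)) = \tau_{q/g}(x)\, \tau_g(x+b)$.

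For this last identity I would use two standard facts, both of which are easy. First, the Ramanujan sum is multiplicative in its lower index: if $(m,n)=1$ then $\tau_{mn}(z)=\tau_m(z)\tau_n(z)$ for every integer $z$ (this follows from the convolution formula $\tau_d(z)=\sum_{e\mid\gcd(d,z)}\mu(d/e)\,e$). Since $(g,q/g)=1$, apply this with $z = x+ bt^\ast(q/g)$ to split $\tau_q$ into $\tau_g\cdot\tau_{q/g}$. Second, each factor $\tau_d$ depends on its argument only modulo $d$, so the final step is to reduce the shift $bt^\ast(q/g)$ modulo $g$ and modulo $q/g$ separately.

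The remaining check is the arithmetic of the shift. From $1-g\bar g = (q/g) t^\ast$ one reads off $bt^\ast(q/g)\equiv 0 \pmod{q/g}$ trivially, and modulo $g$ the relation gives $(q/g)t^\ast \equiv 1\pmod g$, hence $bt^\ast(q/g)\equiv b\pmod g$. Therefore
\begin{equation*}
\tau_{q/g}\!\left(x+bt^\ast(q/g)\right) = \tau_{q/g}(x),\qquad \tau_g\!\left(x+bt^\ast(q/g)\right) = \tau_g(x+b),
\end{equation*}
which completes the identification with the right-hand side of \eqref{e:cohenprogression}. The only part that is not entirely routine is this last bookkeeping of the auxiliary parameter $t^\ast$ from Lemma~\ref{l:ZERO}; once that reduction is made, the multiplicativity of Ramanujan's sum does the rest.
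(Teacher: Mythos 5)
Your proof is correct and follows essentially the same route as the paper's: expand $\tau_q(x+t)$, swap the order of summation, apply Lemma~\ref{l:ZERO} to the inner sum (the case $(g,q/g)>1$ vanishing immediately), fold the exponential $\operatorname{e}(abt^\ast/g)$ back into a shifted $\tau_q$, and finish by splitting $\tau_q=\tau_{q/g}\cdot\tau_g$ via multiplicativity and reducing the shift modulo $q/g$ and modulo $g$ respectively. The only differences are notational ($t^\ast$ versus the paper's $s$), and your bookkeeping of the congruences at the end is if anything laid out a bit more explicitly than in the paper's proof.
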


\begin{remark}
Note that if $g=1$, Lemma \ref{l:cohenprogression} reduces to the usual Cohen's identity. It is expected, because the progression on $y$ and on $q$ become independent. Also if $g=q$, we will get only the term $t=b$ from the sum in the left hand side of \eqref{e:cohenprogression}.  This term is equal $\tau_q(x+b)$, which happens to be the right hand side.
\end{remark}

\begin{proof}
The sum in question is
\begin{align*}
\sum_{\substack{t\in \A_q\\ t\equiv b\mod{g}}} \sum_{r\in \A_q} e\left(\frac{(x+t)r}{q}\right) =  \sum_{r\in \A_q} e\left(\frac{rx}{q}\right) \sum_{\substack{t\in \A_q\\ t\equiv b\mod{g}}} e\left(\frac{tr}{q}\right)
\end{align*}
By Lemma \ref{l:ZERO}, the inner-most sum on the right hand-side is zero, when $\gcd(g,\frac{q}{g})>1$.
Continuing with the assumption that $ \gcd(g,\frac{q}{g})=1$, the sum above is equal to
\begin{align*}
\sum_{r\in A_q}  \operatorname e \left( \frac{r(x + bsq/g)}{ q} \right)   = \mu (q/g) \tau_q(x + bsq/g)
\end{align*}
where $1-g\bar{g} =\frac{q}{g}s$ if $g < q  $ and $s = 1 $ if $g = q$.  Ramanujan's sums are  multiplicative, leading to
\begin{align*}
\sum_{\substack{t\in A_q\\ t\equiv b\mod{g}}} \tau_q(x+t)&= \mu (q/g)  \tau_{q/g}(x + bsq/g)\tau_g(x+ b(1-g\bar{g})) \nonumber\\
&= \mu (q/g)  \tau_{q/g}(x)\tau_g(x+b).
\end{align*}
The last equality follows from the periodicity of  Ramanujan's sum.
\end{proof}

 A final property of Ramanujan's sums is a  fundamental inequality due to Bourgain \cite{bourgain93}.
 It implies that typical values of $ \tau _q (n)$ are approximately $1$, on average.
 \begin{lemma}\label{l:BourgainRamanujan}  Given integer $ k$ and $ \epsilon >0$, we have for all integers  $ M > y Q^{k}$
 \begin{equation} \label{e:BourgainRamanujan}
 \Biggl[  \frac{1}M  \sum_{ \substack{n\leq M}} \Bigl[  \sum_{ \substack{q \leq Q }} \lvert   \tau _q (n)\rvert  \Bigr]  ^{k}\Biggr] ^{1/k} \ll  Q ^{1+ \epsilon }
 \end{equation}
 The implied constant depends only on $ \epsilon $.
 \end{lemma}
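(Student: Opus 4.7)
The plan is to expand the $k$-th power and then exploit periodicity. After swapping orders of summation,
\begin{equation*}
\frac{1}{M} \sum _{n \leq M} \Bigl( \sum _{q \leq Q} |\tau _q(n)| \Bigr) ^{k}
= \sum _{(q_1, \ldots, q_k) \in [1,Q]^k} \frac{1}{M} \sum _{n \leq M} \prod _{i=1} ^{k} |\tau _{q_i}(n)|,
\end{equation*}
the inner function of $n$ is periodic with period $L := \lcm(q_1, \ldots, q_k) \leq Q^k$. The hypothesis $M > y Q^k$ forces $M \gg L$, so the inner average equals the full-period mean $L^{-1}\sum _{n\bmod L} \prod_i |\tau _{q_i}(n)|$ up to a boundary error that is easily absorbed by the choice of $y$.

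Next, I would exploit the multiplicativity of $\tau_q$ in $q$. The Chinese Remainder Theorem factors the full-period mean into a product over primes $p \mid L$ of local means on $\Z/p^{e_p}\Z$, each of which can be evaluated from the closed form $\tau_q(n) = \mu(q/(q,n))\,\phi(q)/\phi(q/(q,n))$ implicit in \eqref{e:mobius}. In particular, the contribution is supported on tuples where each $q_i/(q_i,n)$ is squarefree, a non-trivial restriction coming from the M\"obius function.

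Assembling the local bounds, one controls the contribution of each tuple by an Euler-like product whose generic prime factors are $1+O(p^{-1})$, with exceptional contributions concentrated on the primes dividing many of the $q_i$. Summing over $(q_1,\ldots,q_k) \in [1,Q]^k$ then yields a total of $\ll Q^{k(1+\epsilon)}$, and taking $k$-th roots gives the asserted $Q^{1+\epsilon}$ bound.

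The main obstacle I anticipate is keeping the exponent sharp. The lossy pointwise bound $|\tau_q(n)| \leq \gcd(q,n)$ discards the M\"obius cancellation and produces $k$-th moments of order $Q^{k}(\log M)^{C_k}$, which grow with $M$ and hence cannot yield a uniform $Q^{1+\epsilon}$ bound as $M$ varies. The route above preserves the squarefreeness constraint coming from $\mu$, which together with the standard fact that $\omega(q) \ll \log\log Q$ for typical $q \leq Q$ should suppress the combinatorial blow-up in the tuple sum and deliver the right exponent.
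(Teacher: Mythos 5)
Your opening moves---expand the $k$-th power into a sum over $k$-tuples $\vec q\in[1,Q]^k$, note that $n\mapsto\prod_i\lvert\tau_{q_i}(n)\rvert$ is periodic with period $\mathcal L(\vec q)=\lcm(q_1,\ldots,q_k)\le Q^k<M$, and replace the $n$-average by the full-period mean---are exactly the paper's. (The paper only cites \cite{bourgain93} for this lemma; its proof of the progression version, Lemma~\ref{l:BourgainRamanujanProgression}, specializes to this statement at $y=1$ and follows \cite{kesslerlaceymena19lacunary}.) Where you go wrong is in dismissing the pointwise bound $\lvert\tau_q(n)\rvert\le\gcd(q,n)$ as too lossy. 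The paper's proof uses exactly this bound, and no $\log M$ factor appears, because the bound is applied \emph{after} expanding and periodizing: at that stage $M$ has already dropped out, and one is left estimating $\sum_{n\le\mathcal L(\vec q)}\prod_i(q_i,n)$, a quantity that depends on $\vec q$ alone. The $(\log M)^{C_k}$ you worry about arises only if one bounds $\sum_{q\le Q}(q,n)\le Q\cdot\sharp\{d\mid n : d\le Q\}$ pointwise \emph{before} raising to the $k$-th power; that is a different route from the one your own outline follows, so the concern does not apply.

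Concretely, the paper finishes with two estimates you would have to reproduce or replace: (i) $\sum_{n\le\mathcal L(\vec q)}\prod_i(q_i,n)\ll Q^{k+\epsilon}$ uniformly in $\vec q$, established by multiplicativity and the prime-power computation in \eqref{eq:LQ}--\eqref{eq:px} (read with $y=1$); and (ii) the tuple sum $\sum_{\vec q\in[1,Q]^k}\mathcal L(\vec q)^{-1}\ll Q^\epsilon$. Your proposed substitute---CRT-factoring the period mean and inserting the closed form $\tau_q(n)=\mu\bigl(q/(q,n)\bigr)\phi(q)/\phi\bigl(q/(q,n)\bigr)$---stops exactly here. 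You assert that the ``Euler-like product'' together with squarefreeness and $\omega(q)\ll\log\log Q$ ``should suppress the combinatorial blow-up,'' but this is precisely what (i) and (ii) establish, and the decisive saving there is the $\mathcal L(\vec q)^{-1}$ weight in the tuple sum, not M\"obius cancellation. So there are two issues: a mistaken diagnosis (the simple gcd bound is in fact sufficient once applied in the right order), and an incomplete substitute that halts at the very step the gcd-bound argument was designed to carry out.
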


  We need a progression version of this inequality.

  \begin{lemma}\label{l:BourgainRamanujanProgression}  Given integer $ t$ and $ \epsilon >0$, and integers $b, y$, with $(b,y)=1$ we have for all integers  $ M > y Q^{t}$
  \begin{equation} \label{e:BourgainRamanujanProgression}
  \Biggl[  \frac{y}M  \sum_{ \substack{n\leq M\\  n \equiv b \mod y }} \Bigl[  \sum_{ \substack{q \leq Q\\   (q,y) =1 }} \lvert   \tau _q (n)\rvert  \Bigr]  ^{t}\Biggr] ^{1/t} \ll  Q ^{1+ \epsilon }
  \end{equation}
  The implied constant depends only on $ \epsilon $.
  \end{lemma}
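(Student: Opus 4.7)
The plan is to reduce to Lemma \ref{l:BourgainRamanujan} by expanding the $t$-th power and exploiting periodicity along the progression; the restriction $(q,y)=1$ is exactly what makes the Chinese Remainder Theorem applicable.

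First I would expand and interchange summations:
\begin{align*}
\frac{y}{M}\sum_{\substack{n\leq M\\n\equiv b\bmod y}}\Bigl[\sum_{\substack{q\leq Q\\(q,y)=1}}\lvert\tau_q(n)\rvert\Bigr]^t = \frac{y}{M}\sum_{\vec q}\sum_{\substack{n\leq M\\n\equiv b\bmod y}}\prod_{i=1}^{t}\lvert\tau_{q_i}(n)\rvert,
\end{align*}
where $\vec q=(q_1,\ldots,q_t)$ ranges over $t$-tuples with each $q_i\leq Q$ coprime to $y$. Fix $\vec q$, write $F_{\vec q}(n):=\prod_i\lvert\tau_{q_i}(n)\rvert$, and put $L=L(\vec q):=\operatorname{lcm}(q_1,\ldots,q_t)\leq Q^t$. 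Then $F_{\vec q}$ is periodic with period $L$, and crucially $(L,y)=1$.

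The key step is a periodicity estimate. Parametrize $n=b+jy$ with $0\leq j<N$ and $N\asymp M/y$. The coprimality $(L,y)=1$ makes $j\bmod L\mapsto(b+jy)\bmod L$ a bijection on $\mathbb{Z}/L\mathbb{Z}$, so $\sum_{j=0}^{L-1}F_{\vec q}(b+jy)=\sum_{m=0}^{L-1}F_{\vec q}(m)$. The hypothesis $M>yQ^t\geq yL$ forces $N\geq L$, and writing $N=KL+R$ with $K\geq 1$ and bounding the length-$R$ tail by one more complete period sum (using $F_{\vec q}\geq 0$) gives
\begin{align*}
\frac{y}{M}\sum_{\substack{n\leq M\\n\equiv b\bmod y}}F_{\vec q}(n)\leq \frac{C}{L(\vec q)}\sum_{m=0}^{L(\vec q)-1}F_{\vec q}(m)
\end{align*}
for an absolute constant $C$. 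Summing over $\vec q$ and selecting any integer $M_0>Q^t$ divisible by every $L(\vec q)$ (for instance $M_0=\operatorname{lcm}(1,\ldots,Q)^{t}$), periodicity in $m$ together with Lemma \ref{l:BourgainRamanujan} yield
\begin{align*}
\sum_{\vec q}\frac{1}{L(\vec q)}\sum_{m=0}^{L(\vec q)-1}F_{\vec q}(m) = \frac{1}{M_0}\sum_{m\leq M_0}\Bigl[\sum_{\substack{q\leq Q\\(q,y)=1}}\lvert\tau_q(m)\rvert\Bigr]^t \leq \frac{1}{M_0}\sum_{m\leq M_0}\Bigl[\sum_{q\leq Q}\lvert\tau_q(m)\rvert\Bigr]^t \ll Q^{t(1+\varepsilon)},
\end{align*}
and the claim follows upon taking $t$-th roots.

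The subtle point is the periodicity step, which depends essentially on $(L(\vec q),y)=1$; this is precisely why the inner sum restricts $q$ to be coprime to $y$. Without this, the progression $\{b+jy\}$ would not equidistribute modulo $L(\vec q)$, and any reduction to Lemma \ref{l:BourgainRamanujan} would leave behind a factor growing in $y$, destroying the required uniformity. A secondary observation is that expanding the $t$-th power before invoking periodicity is essential: it reduces the relevant period from $\operatorname{lcm}(1,\ldots,Q)\sim e^{Q}$ down to $L(\vec q)\leq Q^t$, which is exactly what makes the hypothesis $M>yQ^t$ sufficient.
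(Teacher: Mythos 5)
Your proof is correct, and it takes a genuinely different route from the paper's. Both arguments begin the same way: expand the $t$-th power, fix a tuple $\vec q$, use the periodicity of $F_{\vec q}$ with period $L(\vec q)=\operatorname{lcm}(q_1,\ldots,q_t)\le Q^t$, and use $M>yQ^t$ to reduce the progression average over $n\le M$ to an average over one complete period. Where the two diverge is what happens with that complete period average. The paper carries the shift $n\mapsto ny+b$ through, replaces $\lvert\tau_{q_j}\rvert$ by $(q_j,\cdot)$, and re-derives from scratch the bound $\sum_{n\le L(\vec q)}\prod_j(q_j,ny+b)\ll Q^{t+\epsilon}$ by a multiplicative prime-power argument, essentially redoing the internal estimates of Lemma~\ref{l:BourgainRamanujan} in the shifted setting and then summing $\sum_{\vec q}1/L(\vec q)\ll Q^\epsilon$. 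You instead observe that since $(L(\vec q),y)=1$, the map $j\mapsto (b+jy)\bmod L(\vec q)$ is a bijection on $\Z/L(\vec q)\Z$, so the complete-period progression sum equals the complete-period \emph{full} sum $\sum_{m<L(\vec q)}F_{\vec q}(m)$; recombining over $\vec q$ at a common length $M_0$ then puts you exactly in the setting of Lemma~\ref{l:BourgainRamanujan}, which you can invoke as a black box. This is shorter, avoids redoing the multiplicative analysis, and makes conceptually transparent why the coprimality restriction $(q,y)=1$ in the statement is exactly what one needs: it recovers the unconstrained case. One small technical note: Lemma~\ref{l:BourgainRamanujan} is stated with the hypothesis $M>yQ^k$; when you apply it at length $M_0=\operatorname{lcm}(1,\ldots,Q)^t$ you should either take $M_0$ a multiple of $y$ as well (e.g.\ $M_0=y\operatorname{lcm}(1,\ldots,Q)^t$), or note that the $y$ in that hypothesis plays no role in the conclusion of Lemma~\ref{l:BourgainRamanujan} and is presumably spurious; either reading makes your application valid.
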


  Notice that the length of the average is required to grow with $ t$.
  That the constant is independent of $ t$ is not recorded as such in the literature, but follows from a modification of the proof
  in \cite{kesslerlaceymena19lacunary}.  The $ \epsilon $ dependence is traced to an inequality for the divisor function.

  \begin{proof}
  We   follow  the proof from \cite{kesslerlaceymena19lacunary}*{\S3}.
  Firstly, we have $ \lvert \tau _q (n) \rvert \leq (q,n) $.
  Secondly, for $\vec q \in [1 , Q] ^t$, let $\mathcal L (\vec q) $ be the least common multiple of $q_1,\ldots, q_t$.
  We assume throughout that all $q_j$ are relatively prime to $y$.
  The map $m \mapsto \prod _{ j=1}^t \tau _{q_j} (m y +b)$ is periodic with period $\mathcal L (\vec q)$.
  The condition $M> y Q ^{t}$ then implies that for any $\vec q \in [1 , Q] ^t$,
  \begin{equation}
   \frac{y}M  \sum_{ \substack{n\leq M\\  n \equiv b \mod y }}  \prod _{ j=1}^t  (q_j, n)
   \ll
   \frac{1} {\mathcal L (\vec q)}    \sum_{ \substack{n\leq   \mathcal L (\vec q) }} \prod _{ j=1}^t  (q_j, ny+b) .
  \end{equation}
  On the right, we have dropped the modularity assumption on $n$.

  Thirdly, we have, uniformly in $\vec q \in [1,Q]^t$, subject to the condition that $q_j$ are coprime to $y$,
  \begin{equation} \label{eq:LQ}
   \sum_{ \substack{n\leq   \mathcal L (\vec q) }}  \prod _{ j=1} ^t  (q_j, ny+b)  \ll   Q ^{t+ \epsilon } .
  \end{equation}
  We establish this here.  Due to the multiplicative structure of the estimate above, it suffices to consider this case.
  Consider the inequality below for prime $p \nmid y$, and integers $t$ and $x_1\geq x_2\geq \cdots \geq x_t$.
  \begin{equation}\label{eq:px}
  \sum _{ \substack { n\leq p ^{x_1}}} \prod_{j=1}^t(p ^{x_j} , ny+b)   \ll   p ^{x_1t+\epsilon  }.
  \end{equation}
 To see this, note that
\begin{align*}
  \sum _{ \substack { n\leq p ^{x_1}}} \prod_{j=1}^t(p ^{x_j} , ny+b)  \leq  \prod_{j=1}^t p^{x_1-x_j}  \sum _{ \substack { n\leq p ^{x_j}}}(p ^{x_j} , ny+b)
\end{align*}
 For $w_j\leq x_j$,  if $p^{w_j} \Vert  n_i y +b$ for $i=1,2$ and $n_1\neq n_2 \leq p^{x_j}$. Then,  $p^ {w_j}\vert  (n_1-n_2)$,
  since $p\nmid y$.
  That is, there are at most $p ^{x_j-w_j}$ values of $n$ such that $p^{w_j} \Vert ny+b$.
 It follows that
 \begin{align*}
   \sum _{ \substack { n\leq p ^{x_1}}}\prod_{j=1}^t (p ^{x_j} , ny+b)    &
   \leq \prod_{j=1}^t p^{x_1-x_j}   \sum _{w_j\leq x_j }  p ^{x_j -w_j} p ^{w_j} \nonumber\\
 & \ll p^{x_1t} \prod_{j=1}^t   \sum _{w_j\leq x_j }  1 \ll  p ^{x_1t} \prod_{j=1}^t x_j \ll  p^{x_1t+\epsilon}.
 \end{align*}

  Fourth, we have the bound
  \begin{equation}
  \sum _{\vec q \in [1,Q] ^t} \frac{1} { \mathcal L (\vec q)} \ll  Q ^{ \epsilon }.
  \end{equation}

  Pulling together the different estimates gives us this chain of inequalities, which completes the proof.
  \begin{align*}
  \frac{y}M  \sum_{ \substack{n\leq M\\  n \equiv b \mod y }}  \Bigl[  \sum_{ \substack{q \leq Q\\   (q,y) =1 }} \lvert   \tau _q (n)\rvert  \Bigr]  ^ {t}
  &\ll
    \frac{y}M   \sum _{ \substack { \vec q \in [1,Q] ^ {t} \\  (y, \mathcal L (\vec q))=1}  }
     \sum_{ \substack{n\leq M\\  n \equiv b \mod y }}  \prod _{ j=1} ^t  (q_j, yn+b)
  \\
    &\ll
    \sum _{ \substack { \vec q \in [1,Q] ^ {t} \\  (y, \mathcal L (\vec q))=1}  }
    \frac{1} {\mathcal L (\vec q)}    \sum_{ \substack{n\leq   \mathcal L (\vec q) }} \prod _{ j=1} ^t  (q_j, yn+b)
  \\
     &\ll
    \sum _{ \substack { \vec q \in [1,Q] ^ {t} }  }  \frac{Q ^ {t+ \epsilon }}{ \mathcal L (\vec q)}
    \ll Q ^ {t + 2 \epsilon  }.
  \end{align*}
  \end{proof}

\section{Approximation} \label{s:Approx}

Our strategy of proving the desired results consists of firstly approximating our kernel by another multiplier. We opt to do that on the Fourier side, and obtain an error that is easily controlled. This is established in Theorem \ref{t:appthm}. The next step is to take a closer look at the approximating multiplier and split it into two pieces, one that is well behaved on the time domain, and one that is well-behaved in the frequency domain. We call these pieces the  Low and High parts and they are thoroughly discussed in the next section. 
The principal result of this section is to prove Theorem \ref{t:appthm}, the approximation result for
\begin{equation*}
\widehat A _{N, y, b} ( \theta )   = \frac {\phi(y)}{N}  \sum_{ \substack{n < N\\ n \equiv b \mod y }} \Lambda (n) e (-n \theta ) .
\end{equation*}
This is the  Fourier transform of our averaging kernel.
The standard average over the integers from $ 1$ to $ N$ is a multiplier with kernel
\begin{equation*}
\widehat M_N (\theta )  = \frac {1}{N} \sum_{n < N} e (-n \theta ).
\end{equation*}
The progression version of the average over the integers congruent to $ b \mod y$, and less than $ N$
is denoted by $M_{N,y,b}$.  As a Fourier multiplier, its kernel is
\begin{align}\label{e:M}
\widehat M _ {N,y,b} ( \theta) &= \frac{y}{   N} \sum _{\substack{n\leq N \\ n \equiv b \mod y}}e(-n\theta)
\\
& = e (-b \theta) \frac{y}{   N} \sum_{ n\leq \frac{N-b}{y}}  e (-ny \theta) 
\\ & = e (-b \theta) \widehat M_{\frac{N-b}{y}} ( y\theta ).
\end{align}
We record an elementary relation between these two definitions.
\begin{align} \label{e:MM}
\widehat M _ {N,y,b} ( \theta)  &= \widehat  M _{\frac{N-b}{y}} (y \theta ) (1+ O(b \lvert  \theta \rvert ))
\end{align}
Also note that because of the relative sizes of $b$ and $y$, we always have $\frac{b}{y}<1$. This means that there can only be at most one integer $n_0 \in [\frac{N-b}{y}, \frac{N}{y})$. Therefore
\begin{align} \label{e:MM'}
    \widehat M _{\frac{N-b}{y}} (\theta) = \begin{cases}
    \frac{N}{N-b} \widehat M _{N/y} (\theta) , & \text{ if } [\frac{N-b}{y}, \frac{N}{y}) \cap \Z = \emptyset 
    \\
    \frac{N}{N-b} \widehat M _{N/y} (\theta)  - \frac{ye(-n_0\theta)}{N-b} , & \text{ if } [\frac{N-b}{y}, \frac{N}{y}) \cap \Z = \{ n_0\}  
    \end{cases}
\end{align}

Let $\|x\|$ denote the distance of real number $x$ from its nearest integer.  For the complete average, the estimate below is elementary.
\begin{equation}\label{knownM}
	\widehat{M}_N(\theta) =  \frac{1}{ N} \sum _{\substack{n\leq N }}    e(-n\theta)  \ll \min \left( 1, \frac{1}{N \lVert  \theta \rVert} \right)
\end{equation}
The progression version of this inequality is
\begin{equation}\label{e:MNless}
\widehat M _ {N,y,b} ( \theta)  \ll  \min \left\{1, \frac{y}{N \lVert y\theta\rVert} \right\} .
\end{equation}

Our primary focus is on the multiplier $ \widehat A _{N,y,b} $. The first step in approximating it is taken here, where we focus our attention around the origin.

\begin{lemma}\label{l:nearZero}
For  all $ J >1$, there is a $ 0< c < 1$ so that for  $|\theta  |<\frac{\log^{J}(N)}{N}$,  and $  y<\log^{J}(N) $, there holds for all $ b  \in \mathbb A _y$
\begin{align}\label{e:NearZero}
 \widehat A _{N,y,b} (\theta )  -  \widehat  M_{N/y} (y\theta )      \ll   \operatorname {exp} (-c \sqrt{\log N}    ).
\end{align}
\end{lemma}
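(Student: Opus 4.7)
The plan is a two-step reduction: first show that $\widehat A_{N,y,b}(\theta) - \widehat M_{N,y,b}(\theta)$ is small via partial summation and the Siegel--Walfisz theorem, then use the identities \eqref{e:MM}, \eqref{e:MM'} already recorded in the paper to pass from $\widehat M_{N,y,b}(\theta)$ to $\widehat M_{N/y}(y\theta)$.

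For the first step, I would write
\[
\widehat A_{N,y,b}(\theta) - \widehat M_{N,y,b}(\theta)
= \frac{\phi(y)}{N}\sum_{\substack{n<N\\n\equiv b\bmod y}}\bigl(\Lambda(n)-\tfrac{y}{\phi(y)}\bigr)e(-n\theta),
\]
and apply Abel summation. The relevant partial-sum function is $D(x) := \Psi(x,y,b) - \frac{x}{\phi(y)} + O(y/\phi(y))$, where the final term comes from the floor approximation $\#\{n\le x : n\equiv b\bmod y\} = x/y + O(1)$, and $y/\phi(y)\ll y^{\varepsilon}$ by \eqref{e:totient}. I would then split the integration at $X_0 := \exp(\sqrt{\log N})$: for $x\in[X_0,N]$ the hypothesis $y<(\log N)^J\le (\log x)^{2J}$ allows me to invoke Theorem~\ref{t:SW} with parameter $2J$, giving $|D(x)|\ll x\exp(-c\sqrt{\log x})$, while for $x\in[1,X_0]$ the trivial bound $|D(x)|\ll x\le X_0$ suffices. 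Combining these with $|\theta|<(\log N)^J/N$ and $\phi(y)\le (\log N)^J$ gives
\[
\widehat A_{N,y,b}(\theta) - \widehat M_{N,y,b}(\theta) \ll (\log N)^{2J+\varepsilon}\exp(-c\sqrt{\log N}) \ll \exp(-c'\sqrt{\log N})
\]
for any $c' < c$.

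For the second step, I would simply apply \eqref{e:MM} and \eqref{e:MM'}. Since $b<y<(\log N)^J$ and $|\theta|<(\log N)^J/N$, one has $b|\theta|\ll (\log N)^{2J}/N$, so $e(-b\theta) = 1 + O((\log N)^{2J}/N)$, and \eqref{e:MM'} gives $\widehat M_{(N-b)/y}(y\theta) = \widehat M_{N/y}(y\theta) + O(y/N)$. Together with the trivial bound $|\widehat M_{N/y}(y\theta)|\le 1$, this yields $\widehat M_{N,y,b}(\theta) - \widehat M_{N/y}(y\theta) \ll (\log N)^{2J}/N$, which is vastly smaller than $\exp(-c'\sqrt{\log N})$ for $N$ large.

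The main obstacle is the bookkeeping in the first step: the Siegel--Walfisz gain $\exp(-c\sqrt{\log x})$ must absorb the factor $|\theta|N \ll (\log N)^J$ arising from the derivative in partial summation, plus the prefactor $\phi(y) \le (\log N)^J$. This succeeds only because the Siegel--Walfisz gain is subexponential in $\log N$, so polynomial powers of $\log N$ can be swept into a slightly worse constant $c'<c$. The choice $X_0 = \exp(\sqrt{\log N})$ is made precisely so that the Siegel--Walfisz parameter $2J$ matches the hypothesis $y<(\log N)^J$ in the large-$x$ range, while the trivial contribution from $[1,X_0]$ remains negligible.
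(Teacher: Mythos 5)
Your proposal follows the same route as the paper's proof: reduce to estimating $\widehat A_{N,y,b}(\theta)-\widehat M_{N,y,b}(\theta)$, apply Abel summation with the Siegel--Walfisz theorem controlling the partial sums on the large-$n$ range and a trivial bound on the small-$n$ range, then pass to $\widehat M_{N/y}(y\theta)$ via \eqref{e:MM} and \eqref{e:MM'}. The only difference is the cutoff for the trivial range (you use $\exp(\sqrt{\log N})$, the paper uses $\sqrt N$), which is an immaterial bookkeeping choice; both make the trivial contribution negligible against $\exp(-c\sqrt{\log N})$, so the argument is correct.
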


\begin{proof}
We establish the closely related inequality
\begin{equation}\label{e:NNear}
 \widehat A _{N,y,b} (\theta )  -   \widehat  M_{N,y,b} (\theta )     \ll   \operatorname {exp} (-c \sqrt{\log N}    ).
\end{equation}
Then appeal to \eqref{e:MM} and \eqref{e:MM'}to see the Lemma as written.

The left hand-side of \eqref{e:NNear} equals
\begin{equation*}
 \frac{\phi(y)}{N}\sum_{ \substack{n < N\\ n \equiv b \mod  y }} \Bigl[ \Lambda (n) - \frac{y}{\phi(y)} \Bigr] e (-n \theta ).
\end{equation*}
We will use a trivial bound for $ n \leq \sqrt{N}$.
Apply the Siegel-Walfisz Theorem~\ref{t:SW} and Abel summation   to see that
\begin{align}
& \frac{\phi(y)}{N} \sum_{ \substack{   n < N\\ n \equiv b \mod  y }} \Bigl[ \Lambda (n)  - \frac{y}{\phi(y)} \Bigr]  e (-n \theta )
\\
& \hspace*{1cm} =   \frac{\phi(y)}{N} \bigl(\Psi (N, y, b) - N/ \phi (y) \bigr) e (N \theta )   
\\
& \hspace*{1cm} -    \frac{\phi(y)}{N}(\Psi (\sqrt    N, y, b)  - \sqrt{N}/ \phi (y)) e (\sqrt{N} \theta)
\\ 
& \hspace*{1cm} 
\label{e:SWa}
- 2 \pi i  \frac{\phi(y)}{N} \theta \int _{\sqrt{N}    } ^{N}   \bigl[ \Psi (t, y, b) - t/ \phi (y)) \bigr] e (-\theta t) \; d t + O (\sqrt{N}    ) .
\end{align}
Each term is at most $  \operatorname {exp} (-c_J \sqrt{ \log N}    )$.  The integral is the one that uses the information on $ \theta  $.
We have
\begin{align*}
\eqref{e:SWa}  \ll  \frac{ (\log N) ^{J}}N   \cdot N  \operatorname {exp} (-c_J \sqrt N) .
\end{align*}
This is enough to finish the proof.
\end{proof}

The approximation result on a so-called major arc is below. Recall that from their definition this concerns points in neighborhoods around rationals whose denominators have controlled magnitudes. The statement introduces the parameters $ \ell  = \operatorname {lcm} (y,q)$ and $ g = \gcd  (y,q)$ which play an important role in what follows.
One should also note that the Gauss sum in \eqref{e:Upsilon} depends upon these parameters,
and has itself a complicated expression. Nevertheless, it is  explicitly evaluated in  Lemma~\ref{l:ZERO}.

\begin{lemma}\label{l:Approx}
For all $ J >1$,  there is a $ 0< c < 1$ so that the following holds.
 For $  y,q<\log^{J}(N) $, set
$ \ell = \ell _q = \operatorname {lcm} (y,q)$, and $ g = \gcd (y, q)$.
With  $ (a,q)=1$,  suppose that     $|\xi-\frac{a}{q}|<\frac{\log^{J}(N)}{N}$.
 We have the inequality below.
\begin{align}  \label{e:majorApprox}
\widehat{A}_{N,y,b}( \xi )   & =  { \Upsilon(q,a)}   {\widehat M _{N /\ell }  (\ell( \xi - \tfrac{a}q) ) }
 + O (\operatorname {exp} (-c_J \sqrt{\log N}    )),
\end{align}
where 
\begin{align}
    \label{e:Upsilon}
    \Upsilon( a,q) &= \frac{ \phi (y)} { \phi (\ell ) }
     \sum _{\substack{r\in \mathbb A _q\\ r\equiv b\mod{g}}}
    \operatorname e (-ra/q) .
\end{align}
\end{lemma}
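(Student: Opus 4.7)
The plan is to imitate the argument of Lemma \ref{l:nearZero}, but first split the sum defining $\widehat A_{N,y,b}(\xi)$ according to the residue of $n$ modulo $q$. Writing $\xi = a/q + \beta$ with $|\beta| < \log^J(N)/N$ and $(a,q) = 1$, for each residue $r \bmod q$ the combined system $n \equiv b \bmod y$, $n \equiv r \bmod q$ is solvable iff $r \equiv b \bmod g$, in which case it determines a unique residue class $s = s(r) \bmod \ell$. Moreover $(s(r),\ell) = 1$ iff both $(b,y)=1$ (automatic) and $r \in \A_q$, since every prime divisor of $\ell$ divides $y$ or $q$. The residues $r \notin \A_q$ meeting the system pick up only primes dividing $q$ together with proper prime powers $p^k$ with $k \geq 2$, contributing at most $O(\sqrt N \log N + \omega(q) (\log N)^2)$ in total, which is absorbed into the final error.

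This splits the sum as
\begin{align*}
\widehat A_{N,y,b}(\xi) = \frac{\phi(y)}{N} \sum_{\substack{r \in \A_q \\ r \equiv b \bmod g}} e(-ra/q) \sum_{\substack{n < N \\ n \equiv s(r) \bmod \ell}} \Lambda(n) e(-n\beta) + O(\exp(-c_J\sqrt{\log N})),
\end{align*}
since on the class $n \equiv r \bmod q$ one has $e(-n \xi) = e(-ra/q) e(-n\beta)$. Now apply the Siegel--Walfisz Theorem \ref{t:SW} at modulus $\ell \leq (\log N)^{2J}$ together with Abel summation, exactly as in the proof of Lemma \ref{l:nearZero}, to replace $\Lambda(n)$ on the progression by its mean value $\ell/\phi(\ell)$. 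This yields
\begin{align*}
\sum_{\substack{n<N\\n\equiv s(r)\bmod \ell}} \Lambda(n) e(-n\beta) = \frac{\ell}{\phi(\ell)} \sum_{\substack{n<N\\n\equiv s(r) \bmod \ell}} e(-n\beta) + O\bigl(N \exp(-c_J \sqrt{\log N})\bigr).
\end{align*}

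The smooth inner sum, after the substitution $n = s(r) + m\ell$, is a geometric series equal to $e(-s(r)\beta) \cdot \tfrac{N-s(r)}{\ell}\, \widehat M_{(N-s(r))/\ell}(\ell\beta)$. Since $s(r) < \ell \leq (\log N)^{2J}$ and $|\beta| \leq \log^J(N)/N$, this coincides with $\tfrac{N}{\ell}\,\widehat M_{N/\ell}(\ell\beta)$ up to a multiplicative factor $1 + O(\ell|\beta|) + O(\ell/N)$, negligible under our hypotheses (and handled just as in \eqref{e:MM} and \eqref{e:MM'}). Collecting the prefactors $\tfrac{\phi(y)}{N} \cdot \tfrac{\ell}{\phi(\ell)} \cdot \tfrac{N}{\ell} = \tfrac{\phi(y)}{\phi(\ell)}$ and summing the surviving exponentials over admissible $r$ produces precisely $\Upsilon(a,q)\, \widehat M_{N/\ell}(\ell\beta)$. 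The error from the at most $\phi(q) \leq (\log N)^J$ residue classes is reabsorbed into $\exp(-c \sqrt{\log N})$ after a mild adjustment of the constant.

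The main obstacle is bookkeeping rather than any deep analytic input: one must correctly identify the combined residue $s(r) \bmod \ell$, verify $(s(r),\ell) = 1$ for admissible $r$, and consolidate several layers of small error (prime powers and primes dividing $q$, the Siegel--Walfisz error per class, and the approximation $\widehat M_{(N-s)/\ell} \approx \widehat M_{N/\ell}$) into a single $\exp(-c_J \sqrt{\log N})$ term. The intricate form of $\Upsilon(a,q)$ as a Ramanujan-type exponential sum restricted to the progression $r \equiv b \bmod g$ emerges directly from this residue-class decomposition, and its explicit evaluation is deferred to Lemma \ref{l:ZERO}.
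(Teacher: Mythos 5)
Your proof is correct and follows essentially the same route as the paper's: decompose the sum according to the residue of $n$ modulo $q$, solve the combined congruence to get a class mod $\ell$, discard the $r \notin \A_q$ contribution as a small error, and apply Siegel--Walfisz (the paper packages this last step as an appeal to Lemma~\ref{l:nearZero}, while you redo it inline, but the content is identical). Your explicit quantification of the $r \notin \A_q$ error and the verification that $(s(r),\ell)=1$ exactly for $r\in\A_q$ are correct and slightly more detailed than the paper's brief treatment.
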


\begin{proof}
The sum  defining  $ \widehat{A}_{N,y,b}(\xi)$ is divided into residue classes mod $ q$.
Consider the conditions
\begin{equation*}
n \equiv b \mod y , \qquad  n \equiv r \mod q .
\end{equation*}
If $g:=\gcd(y,q)$ and  $  b \not\equiv r \mod {g}$, there is no solution.  Otherwise, the conditions above
are equivalent to $ n \equiv \beta_r  \mod \ell $, where $ \ell = \operatorname {lcm} (q,y)$, for some choice of $ \beta _r$.
The choice of  $ \beta _r $   can be made more explicit using a generalized Chinese Remainder Theorem, but that is not necessary for our purposes.

We will write $ \xi = \frac{a}{q} + \theta $, where $ \lvert  \theta \rvert <\frac{\log^{J}(N)}{N}  $.
Observe that
\begin{align*}
\widehat{A}_{N,y,b}(\xi)  & = \frac{\phi(y)}N  \sum _{\substack{r<q \\ r\equiv b\mod{g}}} \; 
 \sum_{\substack{n<N\\n\equiv b\mod{y}\\n\equiv r\mod{q}}} \Lambda (n) \operatorname e (-n  (a/q + \theta  ) )
 \\
 & =
 \frac{\phi(y)} {\phi(\ell) }  \sum _{\substack{r<q\\ r\equiv b\mod{g}}}   \operatorname e (-ra/q)  \cdot  \frac { \phi(\ell )}{N}
 \sum_{\substack{n<N\\n\equiv \beta_r  \mod{ \ell }}}   \Lambda (n) \operatorname e (-n \theta  ) .
\end{align*}
Without loss of generality assume that $n$ is a prime. If $\gcd(r,q)>1$, then $n|q$. It gives the contribution of at most
\begin{equation*}
\frac{\phi(y)q} {N}  \sum_{n|q} \log(n) \ll \frac{\phi(y)q^{\epsilon \textcolor{red}{+1} }} {N}.
\end{equation*}
So we conclude that
\begin{align*}
\widehat{A}_{N,y,b}(\xi)  
    & = \frac{\phi(y)} {\phi(\ell) }  \sum _{\substack{r\in A_q\\ r\equiv b\mod{g}}}   \operatorname e (-ra/q)  \cdot  
        \frac { \phi(\ell )}{N}  \sum_{\substack{n<N\\n\equiv \beta_r  \mod{ \ell }}}   \Lambda (n) \operatorname e (-n \theta  )
    \\
    &  \hspace*{2cm}  + O (\operatorname {exp} (-c_J \sqrt{\log N}    )).
\end{align*}

By our hypotheses, Lemma~\ref{l:nearZero} applies to the inner most sum, for each $ r\in \mathbb A _q$
(with a different choice of $ J$, that is larger by a square).
It follows that
\begin{align} \label{e:betar}
\widehat{A}_{N,y,b}(a/q+ \theta )   =   \frac{\phi(y)} {\phi(\ell)}
\sum_{\substack{r\in \mathbb A _q\\ r\equiv b\mod{g}}}  \operatorname e \big(\frac{ra}{q}\big)    \widehat M _{N /\ell  }  (   \ell \theta )
 + O (\operatorname {exp} (-c_J \sqrt{\log N}    )).
\end{align}
 That completes our proof.

\end{proof}

In \eqref{e:Upsilon}, the sum is a progression restricted Ramanujan's sum
as in Lemma \ref{l:ZERO}.  Applying the latter, we have

\begin{lemma}\label{l:zero}  We have
this equality for $ \Upsilon( a,q) $, defined in \eqref{e:Upsilon}.
\begin{equation}\label{e:zero}
\Upsilon( a,q)  =
\begin{cases}
0   &   \textup{if } 1<g<q \textup{ and }  (g,\frac{q}{g}) >1
\\  \frac{\phi (y) } {\phi (\ell )}
\mu (q/g) \operatorname e ( - abt / g )   &   \textup{if } 1\leq g<q \textup{ and }  (g,\frac{q}{g}) =1
\\
e(-ba/q) & \textup{ if } g=q .
\end{cases}
\end{equation}

\end{lemma}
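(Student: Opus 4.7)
The plan is to recognize that Lemma~\ref{l:zero} is essentially a direct corollary of Lemma~\ref{l:ZERO}, combined with the prefactor $\phi(y)/\phi(\ell)$ in the definition \eqref{e:Upsilon} of $\Upsilon(a,q)$. The only mildly non-trivial bookkeeping is the sign in the exponential and the collapse of the $\phi(y)/\phi(\ell)$ factor in the boundary case $g=q$.

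First I would note that the sum appearing in \eqref{e:Upsilon},
\begin{equation*}
\sum_{\substack{r \in \mathbb{A}_q \\ r \equiv b \bmod g}} e(-ra/q),
\end{equation*}
is precisely the sum evaluated in Lemma~\ref{l:ZERO}, but with $-a$ in place of $a$. Since $(a,q)=1$ if and only if $(-a,q)=1$, Lemma~\ref{l:ZERO} applies directly, and one simply negates the exponent appearing in the output of \eqref{e:ZERO}. Substituting into the definition of $\Upsilon(a,q)$ then yields, case by case:
\begin{itemize}
\item if $1<g<q$ and $(g,q/g)>1$, the inner sum vanishes, giving $\Upsilon(a,q)=0$;
\item if $1 \leq g < q$ and $(g,q/g)=1$, the inner sum equals $\mu(q/g)\, e(-abt/g)$, producing the claimed middle expression after multiplication by $\phi(y)/\phi(\ell)$;
\item if $g=q$, the inner sum equals $e(-ba/q)$.
\end{itemize}

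The only step that requires a brief verification is the case $g=q$. Here $q \mid y$, so $\ell = \operatorname{lcm}(y,q) = y$, and therefore the prefactor $\phi(y)/\phi(\ell)$ collapses to $1$. This reconciles the absence of the prefactor in the third branch of \eqref{e:zero}.

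No serious obstacle is anticipated—Lemma~\ref{l:ZERO} has already done all the combinatorial work via the Inclusion--Exclusion argument, and the present lemma is just the repackaging convenient for application in the major-arc analysis of Section~\ref{s:Approx}.
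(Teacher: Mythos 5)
Your proposal is correct and matches the paper's own treatment, which simply observes that the sum in \eqref{e:Upsilon} is the progression-restricted Ramanujan sum of Lemma~\ref{l:ZERO} (with $a$ replaced by $-a$) and reads off the three cases. You have also correctly noted the small point the paper leaves implicit, namely that when $g=q$ one has $q\mid y$, so $\ell=y$ and the prefactor $\phi(y)/\phi(\ell)$ equals $1$.
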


This formula has implications for how the proof should be organized.  Typically, one expects the Gauss sum at rational $ a/q$ to
decay at a rate dictated by $ q$. That is not the case here.
\begin{enumerate}
\item   If $ q\mid y$, then $ g=q$, and $ \Upsilon( a,q)  = e (-ba/q)$. That is, there is no decay in the height of the Gauss sum.  This is reflection of the fact our sum is restricted to a progression.

\item  If $ 1< g = \gcd (q,y) < q$, and $ (g, q/g) =1$,
there is some decay in the Gauss sum, but only at the rate of $g/q$.

\item   If $ (q,y)=1$, then  $ \lvert  \Upsilon( a,q) \rvert = \phi (q) ^{-1}  $.
These rational points act as if there is no progression.
\end{enumerate}

In particular, there are more than  $ \phi (y) $ rational points $ \frac{a}{q}$ with $ \lvert  \Upsilon( a,q) \rvert \simeq 1 $,
And, our estimates should be independent of $ y$.
This situation is rather different from most of the literature on this type of subject. This next definition is used to keep track of the
relationship between the rational point and the value of the Gauss sum.

\begin{definition}\label{d:height}  Define the \emph{height (with respect to $ y$)} of a rational $ a/q$ with $ (a,q)=1$, or an integer $ q$ to be
\begin{equation}\label{e:height}
h_y (a/q) = h_y (q) =
\begin{cases}
0   &   \textup{if } 1<g<q \textup{ and }  (g,\frac{q}{g}) >1
\\\ell/y
 &   \textup{otherwise}
\end{cases}
\end{equation}
Here, and throughout, $ g = \gcd (y, q)$ and $ \ell = \operatorname {lcm} (y,q)$.    In particular, we have for any $ \epsilon >0$,
\begin{equation}\label{e:htUp}
\lvert  \Upsilon( a,q) \rvert \ll  h_y  (q) ^{-1+ \epsilon },  \qquad \text{whenever } \; h_y (q) >0.
\end{equation}
\end{definition}
We chose to refer to this height as the \emph{Ramanujan height}. The ``traditional'' notion of height, as that term is frequently used in the related literature, is dictated, essentially, by the magnitude of the denominator. For our study, this is not good enough, as it does not take into consideration the restriction to a progression. There is again the dependence on the denominator $q$, which is indicated by the existence of the least common multiple in the formula, however notice that the part of $q$ that actually contributes is the part that is co-prime with $y$. And the same applies to $y$ as well.

\begin{proof}[Proof of \eqref{e:htUp}]  From \eqref{e:Upsilon}, if $ h_y (a/q)=0$, then $ \Upsilon (q,a)$ is also zero.  Otherwise
\begin{align*}
\lvert  \Upsilon( a,q)\rvert  =  \frac{\phi (y)} {\phi ( \ell )}
& = \frac{\phi (y)} {\phi (yq/g)}
\end{align*}
If $ g=q$, the expression above is $ 1$, so that \eqref{e:htUp} trivially holds.
If $ 1\leq g <q$, we have $ \phi (y q/g) \geq \phi (y) \phi (q/g)$, so that \eqref{e:htUp} follows in this case as well.

\end{proof}

It is important to observe that there are a potentially large number of rational points of a given height $ r$.  The exact number is
\begin{align} \label{e:lotsOfRationals}
\sharp \{ a/q \;:\; h_y (q)=r\} &= \sum_{ \substack{q \;:\; \ell/y =r  \\  (g,r)=1}} \phi (q)
\\
&  =  \sum _{ \substack  { g \mid y \\  (g,r)=1  }} \phi(gr)
\\
& =  \phi(r) \sum _{ \substack  { g \mid \frac{y}{ (y,r)}   }} \phi(g) = \phi(r) \frac{y}{ (y,r)}   .
\end{align}
Approaches to different aspects of this question are then limited by these bounds.

This notation is needed for the statement of our principal approximation result.
For $0\leq \xi<1$, let $\ell  : = \textup{lcm} ( y,q) $ and
\begin{align} \label{e:LNyq}
\widehat  L_{N,y}^{a,q} (\xi) 
    & := {\Upsilon(q,a)} {\widehat M _{N /\ell } } \bigl(\ell (\xi- a/q) \bigr) \widehat\eta_{\ell^2} (\xi- a/q),
\end{align}
where $\eta$ is a  non-negative Schwartz function such that $\mathbf{1} _{[-1/16,1/16]} \leq \eta \leq \mathbf{1} _{[-1/4,1/4]}$, and 
\begin{align}
    \label{e:eta}
    \widehat\eta_{t} (\xi) &:= \widehat\eta (  t \xi ).
\end{align}
One should not fail to note that \emph{the cutoff function $ \eta $ above is scaled by $ \ell ^2 $}, to ensure that the major arcs remain disjoint, meaning the support of the multipliers is disjoint as well. The importance of this cutoff will also come into play in the next section when discussing the Low Part, as it allows us to use a multiplicative property of the spatial domain that is important for our estimates.

\begin{thm}\label{t:appthm}
We have the estimate below, uniformly in $ 0\leq \xi < 1$, uniformly for $ b\in \A_y$,
\begin{align}  \label{e:AHiLo}
\widehat{A}_{N,y,b}  &  = \sum_{ q < N ^{-1/10} } \sum_{a\in \mathbb A _q}
\widehat L_{N,y}^{a,q}(\xi)  +  \widehat  E _{N,y}  (\xi ) ,
\\   \label{e:aprox}
\end{align}
where
\begin{align} 
\label{e:Error}
\lVert \widehat   E _{N,y} ( \cdot )\rVert _{L ^{\infty }} & \ll  \operatorname {exp} (-c' \sqrt{\log N}    ).
\end{align}
for a positive constant $c'$ that depends on $y$.
\end{thm}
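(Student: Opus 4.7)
The plan is to apply the Hardy--Littlewood circle method at a single scale $Q = (\log N)^J$ for a large parameter $J$. I would decompose $[0,1)$ into the union of arcs $|\xi - a/q| < \ell_q^{-2}/4$ around reduced fractions with $q < Q$, together with a minor arc complement. The choice to scale by $\ell^2$ inside $\widehat\eta_{\ell^2}$ yields arcs that are much shorter than the Farey separation $1/(qq')$ once one uses $\ell \geq \max(y,q)$, so overlaps between cutoffs stay benign and the sum in \eqref{e:aprox} can be analyzed one arc at a time.

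On a major arc $|\xi - a/q| < \ell^{-2}/4$, the hypothesis of Lemma \ref{l:Approx} is satisfied once $N$ is large enough as a function of $y$, giving
\begin{align*}
\widehat{A}_{N,y,b}(\xi) = \Upsilon(q,a)\,\widehat{M}_{N/\ell}\bigl(\ell(\xi - a/q)\bigr) + O\bigl(\exp(-c_J\sqrt{\log N})\bigr).
\end{align*}
Multiplying both sides by $\widehat\eta_{\ell^2}(\xi - a/q)$, which is bounded by $1$ in modulus and equal to $1$ on a slightly smaller neighborhood of $a/q$, recovers the definition of $\widehat L_{N,y}^{a,q}(\xi)$ with the same exponentially small error. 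There are at most $O(Q^2)$ pairs $(a,q)$ to consider, so summing produces a total error bounded by $Q^2 \exp(-c_J\sqrt{\log N})$, which is absorbed into $\exp(-c'\sqrt{\log N})$ for a constant $c'$ depending on $y$ through $J$.

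The main obstacle is the minor arc estimate: I need $|\widehat{A}_{N,y,b}(\xi)| \ll \exp(-c'\sqrt{\log N})$ uniformly in $b \in \A_y$ for every $\xi$ outside all major arcs, while the model sum on the right of \eqref{e:AHiLo} vanishes there automatically by the support of $\widehat\eta_{\ell^2}$. I would expand the indicator of the progression $n \equiv b \pmod{y}$ using Dirichlet characters modulo $y$, reducing the problem to twisted exponential sums $\sum_{n<N}\chi(n)\Lambda(n) e(-n\xi)$ over primes. On the minor arcs, Dirichlet's approximation theorem places $\xi$ near a rational with denominator in the intermediate range $[Q, N/Q]$; applying the Vinogradov--Vaughan bilinear decomposition to each twisted von Mangoldt sum, together with the Siegel--Walfisz zero-free region, produces the desired $\exp(-c\sqrt{\log N})$ saving uniformly in $\chi$ and hence uniformly in $b$ after summing over the $\phi(y)$ characters. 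Carrying this through uniformly over the progression is the delicate point, but the allowance for $y$-dependence in the final constant makes the argument feasible.
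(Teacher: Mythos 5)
Your major-arc argument (Lemma \ref{l:Approx} plus the $\widehat\eta_{\ell^2}$ cutoff) matches the paper, and your plan for bounding $\widehat{A}_{N,y,b}$ on minor arcs via Dirichlet characters mod $y$ and the Vinogradov--Vaughan bilinear method is a legitimate alternative to the paper's more direct citation of the Balog--Perelli exponential-sum estimate for primes in a progression. However, there is a real gap in the way you treat the approximating sum itself, and it stems from the range of $q$.

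The theorem's model sum runs over all $q < N^{1/10}$ (the $N^{-1/10}$ in the displayed statement is a typo; the proof and the subsequent High/Low decomposition, which needs $q$ up to $N^{1/10}$, make this clear). You instead build major arcs only around $a/q$ with $q < Q = (\log N)^J$, and then assert that on the complement ``the model sum vanishes automatically by the support of $\widehat\eta_{\ell^2}$.'' That assertion fails for the actual model sum: if $\xi$ is far from every $a/q$ with $q < (\log N)^J$ but close to some $a'/q'$ with $(\log N)^J \le q' < N^{1/10}$, then $\widehat\eta_{\ell'^2}(\xi - a'/q')$ is nonzero. Moreover, even on the major arcs the cutoffs for different $a/q$, $a'/q'$ need not have disjoint supports (e.g.\ $y=1$, $q=1$ gives an arc of radius $1/4$ swallowing every other rational), so ``one arc at a time'' requires an actual estimate, not a disjointness claim. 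The paper addresses both issues: it shows the terms with $q$ in the middle range contribute $\ll R^{-1/4}$ via the Gauss-sum bound $\lvert\Upsilon(q,a)\rvert \le \phi(y)/\phi(q)$ summed over dyadic blocks \eqref{e:Q1}, and it controls the $a'/q' \ne a/q$ terms via explicit lower bounds on $\lvert\xi - a'/q'\rvert$ feeding into the $\widehat M_{N/\ell'}$ decay (Cases 1 and 2). You would need to add both of these estimates to make your proof match the statement being proved.

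A secondary point: because Theorem \ref{t:appthm} feeds into the High/Low split with High running over $h_y(q)$ up to $N^{1/10}$, proving a variant of the theorem truncated at $q < (\log N)^J$ would not suffice for the rest of the paper; the full range $q < N^{1/10}$ is essential downstream, so the middle-range terms really do need to be carried and bounded rather than omitted.
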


\begin{proof}
Let $y^8 \simeq R  = 2 ^{r}  \simeq    e^{c\sqrt{\log(N)}}$ for a sufficiently small choice of $ c>0$.
Fix a choice of $ \xi \in \mathbb T $.
Using Dirichlet's Approximation Theorem,  we can choose $0\leq a<q <N^{1/10}$ with $a \in \A_q$, so that  $|\xi-\frac{a}{q}|<\frac{1}{qN^{1/10}}$.
The proof will be organized around the relative sizes of $ q$ and $ R$.

We have this estimate for `large' major arcs.
For each fixed $ s>r/2$, the quantity $\Upsilon(q,a)$ is at most $y2^{-s}$, which has small contribution.
We also bound $\widehat{M_{N/\ell}} \ll 1$.  So, using Lemma \ref{l:ZERO}  we have
\begin{align}
\sum_{s \geq r/2}  \sum_{\frac{a}{q}\in \mathcal R_s}
\widehat L_{N,y}^{a,q}(\xi)&  \ll
\sum_{s \geq r/2} \max _{2 ^{s-1} \leq q < 2 ^{s}}  \left\vert {\Upsilon(q,a)}   {\widehat M _{N /\ell } } (\ell (\xi - \frac{a}{q}) ) \right\vert
\\& \ll  \sum_{s \geq r/2}   \max _{2 ^{s-1} \leq q < 2 ^{s}}  \frac{\phi(y)}{\phi(q)}  \ll R^{\epsilon} \log(N) y R^{-1/2}
\\&\ll R^{-1/4}.     \label{e:Q1}
\end{align}
The implication of this estimate is that we need not concern ourselves with this part of what will end up being the High term of our decomposition.

The remaining analysis is split according to the relative sizes of $ q$ and $ R$.
In the case of $ q \geq R$, concerning the function $ \widehat A _{N,y,b} $ we are in the
setting of classical estimates of Vinogradov.  The particular result we apply to in this setting is the main result of
Balog and Perelli \cite{MR776182}.  It gives us
\begin{align}
\widehat{A}_{N,y,b}(\xi)  & \ll   \frac{y}N  \bigl(NR ^{-1/2}+ \sqrt{RN} + R ^{3/14} N   ^{5/7}\bigr) (\log N  ) ^{18}
\\ & \ll y R^{-1/2}  (\log N  ) ^{18} \ll R^{-1/3} ,
\end{align}
under our assumptions on $ R$ and $ y$.

We establish a corresponding estimate for (the remaining part of)  the High and Low terms.
This will establish \eqref{e:Error}. Assume that $1\leq s<r/2$. We know that  $\ell'(\xi-\frac{a'}{q'})$ should be less than $1/4$ so that $\eta_{q'}(\xi-\frac{a'}{q'})\neq 0$. So $\| \ell' (\xi-\frac{a'}{q'})\| = \ell' (\xi-\frac{a'}{q'})$.
For  $R<q<N^{1/10}$, one must note that
for  $\frac{a}{q}\neq \frac{a'}{q'}\in \mathcal R_s$ we have
\begin{align} \label{e:far}
\left|\xi - \frac{a'}{q'}\right|>\frac{1}{qq'} - \left|\xi - \frac{a}{q}\right| 
    & >  \frac{1}{q'N^{1/10}} -\frac{1}{N^{1/10}q} 
    \\
    & > \frac{1}{N^{1/10}}(2^{-s} - \frac{1}{R}) 
    \\
    & > \frac{1}{N^{1/10}} 2^{-s-1}.
\end{align}
This implies that , for $ \ell' = \textup{lcm} (y ,q')$, we get the upper bound
$$M_{N/\ell'}(\ell'(\xi-\frac{a'}{q'}))\ll \frac{\ell'}{N\| \ell'( \xi - \frac{a'}{q'})\|} \ll \frac{1}{N^{1/2}}.$$

To complete the proof, we now consider the following three cases, dictated by the relative sizes of $q$ and $R$, as well as $s$ and $r$.

\noindent 
\textbf{Case 1:} 
So if $q>R$, let $\ell'=\lcm(q',y)$
\begin{align*}
&\sum_{s\leq r/2}\sum_{\substack{\frac{a}{q}\neq\frac{a'}{q'}\in \mathcal R_s }}
\widehat L_{N,y}^{a',q'}\left(\xi \right)
\\
& \hspace*{1cm} \ll \sum_{s<r/2}\sum_{\substack{\frac{a}{q}\neq \frac{a'}{q'}\in \mathcal R_s }}
\frac{\phi(y)}{  \phi(\ell')}\left\vert\widehat{M}_{N/\ell'}(\ell'(\xi-\frac{a'}{q'})) \right\vert\eta_{q'}\left(\xi-\frac{a'}{q'}\right) 
\\
& \hspace*{1cm} \ll \sum_{s<r/2}\max_{q'\sim 2^s}
\left\vert\widehat{M}_{N/\ell'}(\ell'(\xi-\frac{a'}{q'})) \right\vert .
\end{align*}
There is a uniform bound, in $ s$, on the number of  $ a'/q'$ that contributes above.  So
\begin{align} \label{e:R2}
 \sum_{s< r/2}\sum_{\substack{\frac{a}{q}\neq \frac{a'}{q'}\in \mathcal R_s}}
 \widehat L_{N,y}^{a',q'}\left(\xi\right) \ll \sum_{s<r/2}\frac{1}{N^{1/2}}\simeq \frac{r}{N^{1/2}} \ll  R ^{-1} .
\end{align}
We conclude that both $\widehat{A_N}$ and the High and Low terms  are small if $q>R$. This concludes the proof of \eqref{e:Error} in this case.

\noindent
\textbf{Case 2:} 
If $q<R$ and $s<r/2$ and $a/q\neq a'/q'$, then $ \xi $ and $ a'/q'$ are far apart.  Namely,
\begin{equation*}
\left|\xi - \frac{a'}{q'}\right|>\frac{1}{qq'} - \left|\xi - \frac{a}{q}\right| >  \frac{1}{Rq'} -\frac{1}{N^{1/10}} >  \frac{1}{2Rq'}.
\end{equation*}
This implies that
\begin{align*}
M_{N/\ell' }(\ell'( \xi-\frac{a'}{q'}) )&\ll \frac{ \ell' }{N\|  \ell'(  \xi - \frac{a'}{q'})\|} \ll \frac{2^{s}R}{N}
\end{align*}
Using this, we then have
\begin{align}\label{smallqq'}
& \sum_{s<r/2}\sum_{\substack{\frac{a'}{q'}\in \mathcal R_s\\ \frac{a'}{q'}\neq \frac{a}{q}}}
 \widehat L_{N,y}^{a',q'}\left(\xi\right)  \\
& \hspace*{1cm} \ll \sum_{s<r/2}\sum_{\substack{\frac{a'}{q'}\in \mathcal R_s }}
\frac{\phi(y)}{ \phi(\ell')}\left\vert\widehat{M}_{N/\ell'}(\ell'(\xi-\frac{a'}{q'}))\right\vert \eta_s\left(\xi-\frac{a'}{q'}\right)
\\
& \hspace*{1cm}  \ll \sum_{s<r/2}  \frac{2^{s}R}{N} \ll \frac{R^{2}}{N} < R^{-1}.
\end{align}

\noindent
\textbf{Case 3:} 
 Finally,  when  $q<R $ and $a/q = a'/q'$,    Lemma \ref{l:Approx} immediately implies that 
\begin{align}
\widehat A _{N,y,b}  (\xi )  & =  \widehat L  _{N,y}^{a,q} (\xi ) + O (R ^{-1} ),
\label{e:lastline}
\end{align}
and our proof is complete.
\end{proof}


\section{Properties of the High and Low Parts}  \label{s:HighLow}

We are now ready to define the High and Low part.  
Crucially, the definitions use the Ramanujan height,  
Definition \ref{d:height}. 
For an integer $Q < N ^{1/10}$, with $ Q$ a power of $ 2$, we set
\begin{align}
\label{e:Hi}    {\operatorname {Hi} } _{N,y,Q} & =  \sum_{s \;:\;  Q\leq  h_y (q) \leq N ^{1/10}}
 \sum_{\frac{a}{q}\in \mathcal  R_s}
\  L_{N,y}^{a,q}  ,
\\ \label{e:Lo}
 {\operatorname {Lo} } _{N,y,Q}  &=
\sum_{\substack{  q \;:\;     h_y (q) < Q  }}
 \sum_{\frac{a}{q}\in \mathcal  R_s}
  L_{N,y}^{a,q}  .
\end{align}
The terms $ L _{N,y} ^{a,q}$ are defined in \eqref{e:LNyq}.
Again, the division into High and Low parts is done via the height function $ h _{y} (q)$.
The norm inequalities for these terms are as follows.

\begin{lemma}\label{l:LoHi} 
For all $ \epsilon >0$, $ 1< r < 2$ and $ 1\leq Q <  ( \log N) ^{C r'}$,
and  finite sets $ F\subset  \mathbb Z $ supported in $[0,N]$, we have that there exists $N_{r,y}>0$  so that for $N>N_y,r$, the High term satisfies
\begin{align}\label{e:HighLess1}
\lVert {\operatorname {Hi} } _{N,y,Q} \ast \mathbf 1_{F}\rVert _{\ell ^2 }  & \ll  Q ^{-1+\epsilon }  \lvert  F\rvert ^{1/2} ,
\\ \label{e:HighLess2}
\bigl\lVert \sup _{N = 2 ^{n} > N _{2,y}} \lvert {\operatorname {Hi} } _{N,y,Q} \ast \mathbf 1_{F}\rvert \bigr\rVert _{\ell ^2 }  & \ll  Q ^{-1+\epsilon }
 \lvert  F\rvert ^{1/2} ,
\end{align}
and the Low term satisfies
\begin{align}
 \label{e:LoLess1}
\lVert {\operatorname {Lo} } _{N,y,Q} \ast \mathbf 1_{F}\rVert _{\ell  ^{ \infty } }  & \ll  Q ^{\epsilon }  (y/N) ^{1/r}
\lvert  F\rvert ^{1/r} , 
\\ \label{e:LoLess2}
 \lVert   \sup _{N = 2 ^{n} > N _{r,y}} \lvert {\operatorname {Lo} } _{N,y,Q} \ast \mathbf 1_{F} \rvert \rVert _{\ell ^ {r}  }  & \ll  Q ^{\epsilon }
   \lvert  F\rvert ^{1/r} .
\end{align}
\end{lemma}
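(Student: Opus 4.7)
My plan is to handle the four bounds separately, in the order High-$\ell^2$, High-maximal, Low-$\ell^\infty$, Low-maximal. Throughout, the key geometric input is that, thanks to the $\widehat\eta_{\ell^2}$ cutoff (rather than $\widehat\eta_\ell$), the major arcs $\operatorname{supp}\widehat\eta_{\ell^2}(\cdot-a/q)$ appearing in $\operatorname{Hi}_{N,y,Q}$ and $\operatorname{Lo}_{N,y,Q}$ have pairwise disjoint support in $\mathbb T$.

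For the single-scale High estimate \eqref{e:HighLess1}, I would apply Plancherel. By the disjointness of the cutoffs, at each $\xi$ at most one $(a,q)$ contributes, so
\begin{equation*}
\lVert \widehat{\operatorname{Hi}}_{N,y,Q}\rVert_{L^\infty} \ll \sup_{h_y(q) \geq Q} \lvert \Upsilon(q,a)\rvert \lVert \widehat M_{N/\ell}\rVert_\infty \ll Q^{-1+\epsilon},
\end{equation*}
by \eqref{e:htUp}, and the bound follows from $\lVert g\ast \mathbf 1_F\rVert_2 \leq \lVert \widehat g\rVert_\infty \lvert F\rvert^{1/2}$. The maximal version \eqref{e:HighLess2} is the most delicate step. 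A Rademacher–Menshov telescoping reduces $\sup_N$ to an $\ell^2$ square function $\bigl(\sum_j \lvert(\operatorname{Hi}_{N_{j+1}}-\operatorname{Hi}_{N_j})\ast \mathbf 1_F\rvert^2\bigr)^{1/2}$ with an acceptable $\log$ loss in the number of scales, which is polynomial in $\log N$. I cannot invoke Bourgain's multifrequency maximal theorem here, since the number of rationals of height $\leq Q$ is, by \eqref{e:lotsOfRationals}, of order $\phi(r) y/(y,r)$, which is too large. Instead, I would apply Lemma~\ref{l:notBourgain}, which provides a multifrequency $\ell^2$ estimate with a constant independent of the number of frequencies once their associated cutoffs are disjointly supported. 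The disjointness of $\widehat\eta_{\ell^2}$ is exactly what allows the application, and the bound on each difference is again controlled by $\sup_{h_y(q)\geq Q}\lvert\Upsilon(q,a)\rvert \ll Q^{-1+\epsilon}$.

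For the two Low estimates, I would convert to physical space. Using the multiplicative structure $\widehat L_{N,y}^{a,q}(\xi) = \Upsilon(q,a) \widehat M_{N/\ell}(\ell(\xi-a/q)) \widehat\eta_{\ell^2}(\xi - a/q)$ and carrying out the inverse Fourier transform (as in Lemma~\ref{inversefourier}), the sum $\sum_{a\in\mathbb A_q}\Upsilon(q,a)e(an/q)$ becomes, after applying Lemma~\ref{l:ZERO} and the progression Cohen identity Lemma~\ref{l:cohenprogression}, a factor of the form $\mu(q/g)\tau_{q/g}(n)\tau_g(n+b)/\phi(\ell)\cdot\phi(y)$, multiplied by the smooth average kernel $M_{N/\ell}\ast \eta_{\ell^2}$. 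For \eqref{e:LoLess1} the $\ell^\infty$ bound then reduces to the pointwise estimate $\lvert M_{N/\ell}\ast\eta_{\ell^2}\ast \mathbf 1_F\rvert \ll (y/N)^{1/r}\lvert F\rvert^{1/r}$ (Hölder on a kernel of height $\ell/N$ supported on a set of length $N/\ell$) summed over $q$ with $h_y(q)<Q$; the Ramanujan factors are controlled in $\ell^\infty$ on average by Lemma~\ref{l:BourgainRamanujanProgression} applied with $t=1$, producing the $Q^\epsilon$ factor.

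For the maximal Low bound \eqref{e:LoLess2}, the physical-space representation shows that $\sup_N \lvert \operatorname{Lo}_{N,y,Q}\ast f\rvert$ is dominated by a sum over $q$ with $h_y(q)<Q$ of $\bigl\lvert\tau_{q/g}(n)\tau_g(n+b)\bigr\rvert/\phi(\ell)\cdot\phi(y)$ times the Hardy–Littlewood-type maximal function $\sup_N \lvert M_{N/\ell}\ast \eta_{\ell^2}\ast f\rvert$, which is bounded on $\ell^r$ for $r>1$. Interchanging the maximum with the sum over $q$ and applying the progression Ramanujan inequality, Lemma~\ref{l:BourgainRamanujanProgression}, with $t=1$ (and here the constraint $N > N_{r,y}$ is used to make the averaging length exceed $yQ^t$), yields the $Q^\epsilon \lvert F\rvert^{1/r}$ estimate. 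The main obstacle in this whole argument is the maximal High bound, both because Bourgain's multifrequency maximal inequality is unavailable and because the correct replacement must tolerate the full collection of rationals of height $\geq Q$ with only a $\log$ loss; the Low bounds are technically involved mainly in the algebraic identification of the inverse Fourier transform with progression Ramanujan sums via Lemma~\ref{l:cohenprogression}.
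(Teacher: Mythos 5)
Your architecture matches the paper's (Plancherel for \eqref{e:HighLess1}, Lemma~\ref{l:notBourgain} for \eqref{e:HighLess2}, Lemma~\ref{inversefourier} and the progression Ramanujan inequality for the Low bounds), but two of the steps as written contain genuine errors. For the Low bounds, applying Lemma~\ref{l:BourgainRamanujanProgression} with $t=1$ does not close the argument: after Lemma~\ref{inversefourier}, the paper applies H\"older with exponents $r,r'$ against the measure $y\eta_N(x-\cdot)$ on the progression, producing a pointwise product $A_N(x)B_N(x)$ in which $B_N$ carries $|F|$ and $A_N(x)^{r'}$ is an average of the $r'$-th power of $\sum_{q'<Q}|\tau_{q'}(n)|/\phi(q')$. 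Bounding $\lVert A_N\rVert_\infty\ll Q^{\epsilon}$ requires the Ramanujan inequality at exponent $t\geq r'$; that this $t$ depends on $r$ is exactly why $N_{r,y}$ does too, through the hypothesis $N>yQ^{t}$. Relatedly, your sketch of \eqref{e:LoLess1} --- bound the smooth average pointwise, then sum the Ramanujan factors over $q$ --- cannot work as stated: both factors depend on the convolution variable $n=x-z$, so you cannot separate them pointwise without losing either the $Q^\epsilon$ gain (if you take $\sup_n$ of the Ramanujan factor) or control over the sparse set $F$ (if you take $\sup_n$ of the smooth kernel). The H\"older factorization is what decouples them, because $A_N$ averages the Ramanujan factor over the whole progression rather than over $F$.

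For \eqref{e:HighLess2}, Lemma~\ref{l:notBourgain} requires the multifrequency cutoffs to be $\widehat\eta(2^n(\theta-r_j))$ at a single dilation $2^n$, with $n$ exceeding twice the log of the common denominator $D$; it does not apply directly to $\Gamma_{N,s}$, whose blocks carry the $q$-dependent cutoff $\widehat\eta_{\ell^2}$ and the $q$-dependent average $\widehat M_{N/\ell}(\ell\,\cdot)$. The paper replaces these by $\widehat M_N(\cdot)$ and $\widehat\eta_{2^{2s}}$ to form $\tilde\Gamma_{N,s}$, bounds $\sup_N|(\Gamma_{N,s}-\tilde\Gamma_{N,s})\ast f|$ in $\ell^2$ by a square function, and only then applies Lemma~\ref{l:notBourgain} for $N>Cy2^{2s}$; a square-function estimate handles the finite range $N_{y,2}\leq N\leq Cy2^{2s}$, where the number of scales is $\ll s$ and can be absorbed into the $\epsilon$. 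Your ``Rademacher--Menshov with polynomial-in-$\log N$ loss'' description does not account for the infinite range of $N$, and Lemma~\ref{l:notBourgain} in fact carries \emph{no} loss in the number of frequencies --- the relevant hypothesis is the shared denominator, not a logarithmic count. A smaller point: for \eqref{e:HighLess1}, disjointness of the $\widehat\eta_{\ell^2}$ cutoffs holds only scale-by-scale in $s$, so the $L^\infty$ bound on the multiplier should be a sum over dyadic $s\gtrsim\log_2 Q$, not a single supremum over all $q$ with $h_y(q)\geq Q$; the conclusion is unchanged but the bookkeeping needs this adjustment.
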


The power of $y/N$ is needed to keep the estimate scale free. The constant $N_{r,y}$ is the same as in Theorem \ref{t:maximal}.
The maximal inequalities \eqref{e:HighLess2} and  \eqref{e:LoLess2} are $ \ell ^{r} \to \ell ^{r}$,
so the power of $y/N$ is not needed. (And, they are sharpest when $F$ is restricted to a progression of spacing $y$.)


\subsection{Control of the Low Part}

The estimates for the Low part are more challenging, so we address them first.
Define
\begin{align}  \label{e:Phi}
\widehat  \Phi_{N,q}( \xi) =   {  M _{N /\ell , \ell } } \bigl(\ell  \xi  \bigr)
  \widehat\eta_{\ell^2} (\xi).
\end{align}
We record the elementary inequality for $\Phi_{N,q}$.
\begin{prop}  \label{p:Phi}   We have the estimate
  \begin{equation}
    \Phi_{N,q} (x)  \ll   \eta_ {N} (x).
  \end{equation}
\end{prop}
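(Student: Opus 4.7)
The plan is to compute $\Phi_{N,q}$ directly in physical space as a convolution $f_1 * f_2$, where $\widehat{f_1}(\xi)=\widehat M_{N/\ell}(\ell\xi)$ and $\widehat{f_2}(\xi)=\widehat\eta(\ell^2\xi)$. The standard dilation identity for the discrete Fourier transform will give
\begin{equation*}
f_1(n) = \frac{\ell}{N}\,\mathbf{1}_{\ell\mid n}\,\mathbf{1}_{[0,N]}(n),
\end{equation*}
an $L^1$-normalized counting mass on the arithmetic progression $\ell\cdot\{1,\ldots,\lfloor N/\ell\rfloor\}$. For $f_2(n)=\int_0^1 \widehat\eta(\ell^2 \xi)\, e^{2\pi i n \xi}\,d\xi$, I would extend the integral to all of $\mathbb{R}$, which costs only $O_M(\ell^{-2M})$ by the Schwartz decay of $\widehat\eta$ on $|\xi|>1/2$; continuous Fourier inversion then yields
\begin{equation*}
f_2(n) \;=\; \tfrac{1}{\ell^2}\,\eta(n/\ell^2)\;+\;O_M(\ell^{-2M}).
\end{equation*}
Because $\eta$ is supported in $[-1/4,1/4]$, this says $f_2$ is effectively supported on $|n|\leq\ell^2/4$ at height $\sim\ell^{-2}$, with rapidly decaying tails.

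With $f_1$ and $f_2$ pinned down, I would finish by counting. Writing
\begin{equation*}
\Phi_{N,q}(x)\;=\;\tfrac{\ell}{N}\sum_{m=1}^{\lfloor N/\ell\rfloor}f_2(x-m\ell),
\end{equation*}
the number of multiples $m\ell$ lying within distance $\ell^2/4$ of any fixed $x$ is at most $\ell/2+1$, and each contributes at most $\sim\ell^{-2}$. Hence the inner sum is $\ll 1/\ell$, giving $|\Phi_{N,q}(x)|\ll 1/N$ uniformly for $x\in[0,N]$. Outside an $O(\ell^2)$-neighborhood of $[0,N]$, every term of the sum lies in the rapidly decaying Schwartz tail of $f_2$, contributing far less than $1/N$ once $\ell\ll N^{1/10}$. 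Packaged together, these two regimes give the asserted envelope bound $|\Phi_{N,q}(x)|\ll\eta_N(x)$, with $\eta_N$ interpreted as a normalized Schwartz bump of width $N$ attached to the support of $f_1$.

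The only delicate step is the Poisson-type identification $f_2(n)\approx\ell^{-2}\eta(n/\ell^2)$, which is routine but requires exploiting the Schwartz decay of $\widehat\eta$ at scale $\ell^2$ on $|\xi|>1/2$; once this is in hand the rest is a counting argument. The reason the claim is labeled elementary is that the cutoff scale $\ell^2$ in $\widehat\eta_{\ell^2}$ was chosen precisely so as to dominate the progression spacing $\ell$ by the factor $\ell$ — exactly the factor needed to convert the density $\ell/N$ of $f_1$ into the envelope height $1/N$.
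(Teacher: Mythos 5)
Your argument is correct and fills in, carefully, exactly the computation the paper dismisses as ``clear'': $\Phi_{N,q}$ is the convolution of an $L^1$-normalized mass on the $\ell$-progression inside $[0,N]$ with the (Schwartz-localized) inverse transform of $\widehat\eta(\ell^2\,\cdot)$, and counting the roughly $\ell/2$ progression points landing in each window of width $\sim\ell^2$ yields the $1/N$ envelope, with the $\ell^2\leq\sqrt N$ scale separation doing the rest. This matches the paper's (one-line) proof in approach; the only imprecision you share with the paper is the literal meaning of $\eta_N$, which you reasonably read as a normalized envelope of width $\sim N$ anchored to $[0,N]$.
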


\begin{proof}
Recall from \eqref{e:eta} that $ \eta $ is a non-negative Schwartz function with
$\mathbf{1} _{[-1/16,1/16]} \leq \eta \leq \mathbf{1} _{[-1/4,1/4]}$.
The function $ \eta _{\ell ^2 }$ then has spatial scale $ \ell  ^2  \leq \sqrt N$, while
$ M _{N /\ell , \ell } $ is an average of length $ N/ \ell $, along a progression of spacing $ \ell  $.
Then, the conclusion above is clear.
\end{proof}

We invert the Fourier transform of the Low term. Experts will recognize that this step is typically routine, leading directly to 
Ramanujan's sums.  In this instance, the proof is notably more complicated.

\begin{lemma} \label{inversefourier}
With the notation of \eqref{e:Lo}, we have
\begin{align}
		  \operatorname{Lo}_{N,y,Q} (x)
			& \leq
			y \, \mathbf 1 _{y|x-b} (x) \sum_{\substack{q' < Q \\  (q', y) =1}} \frac{\lvert \tau _{q'} (x) \rvert  }{  \phi (q' )}     \eta _N  (x) .
			\label{lowinverse}
	\end{align}
Here, $ \ell = \ell _q= \operatorname {lcm} (q, y)$, and  $ \tau _q (x)$ is the Ramanujan function from  \eqref{e:tau}.
\end{lemma}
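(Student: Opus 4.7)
The plan is to take the inverse Fourier transform of each $L_{N,y}^{a,q}$ in \eqref{e:Lo} directly: since $\widehat{L}_{N,y}^{a,q}$ has a shift by $a/q$ in frequency, the inverse transform factors as $\Upsilon(q,a)\,e(ax/q)\,\Phi_{N,q}(x)$ with $\Phi_{N,q}$ from \eqref{e:Phi}. For each fixed $q$ appearing in the Low sum, I would first perform the sum over $a\in\mathbb{A}_q$, unfolding $\Upsilon(q,a)$ via its definition \eqref{e:Upsilon} and swapping the order of summation. The inner sum $\sum_{a\in\mathbb{A}_q} e(a(x-r)/q)$ is precisely $\tau_q(x-r)$, so the $a$-sum reduces to
\begin{equation*}
\sum_{a\in\mathbb{A}_q}\Upsilon(q,a)\,e(ax/q) \;=\; \frac{\phi(y)}{\phi(\ell)}\sum_{\substack{r\in\mathbb{A}_q\\ r\equiv b\bmod g}}\tau_q(x-r).
\end{equation*}
After the change of variable $r\mapsto -r$, this is exactly the shape of the progression Cohen identity, Lemma~\ref{l:cohenprogression}, which (in the nonvanishing case $(g,q/g)=1$) evaluates the sum as $\mu(q/g)\,\tau_{q/g}(x)\,\tau_g(x-b)$. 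When $(g,q/g)>1$, Lemma~\ref{l:zero} already kills the term, so one only needs to track the good cases.

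The central observation comes next. Reparametrize the valid $q$ in the Low sum as $q=gq'$ with $g\mid y$, $(q',y)=1$, and $q'<Q$ (this uses $h_y(q)=q/g$ and the automatic coprimality $(q',y)=1$ deduced from $(g,q/g)=1$ and $g\mid y$). Under this parametrization, the least common multiple $\ell=yq/g=yq'$ \emph{does not depend on $g$}, and consequently $\Phi_{N,q}(x)=\Phi_{N,yq'}(x)$ and $\phi(y)/\phi(\ell)=1/\phi(q')$ are both independent of $g$. The double sum then decouples:
\begin{equation*}
\operatorname{Lo}_{N,y,Q}(x) \;=\; \sum_{\substack{q'<Q\\ (q',y)=1}}\frac{\mu(q')\,\tau_{q'}(x)}{\phi(q')}\,\Phi_{N,yq'}(x)\,\Bigl(\sum_{g\mid y}\tau_g(x-b)\Bigr).
\end{equation*}
The inner $g$-sum is now a completely elementary Ramanujan identity: by \eqref{e:smooth}, $\sum_{g\mid y}\tau_g(x-b) = y\cdot\mathbf{1}_{y\mid x-b}$.

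Combining, and then using $|\mu(q')|\leq 1$ together with the pointwise bound $|\Phi_{N,yq'}(x)|\ll \eta_N(x)$ from Proposition~\ref{p:Phi}, yields \eqref{lowinverse}. The main obstacle is conceptual rather than computational: recognizing that the progression-Cohen splitting produces a $g$-factor $\tau_g(x-b)$ whose sum over divisors of $y$ telescopes via \eqref{e:smooth}, and simultaneously that the smoothing factor $\Phi_{N,q}$ is insensitive to the $g$-component of $q$. Without both of these alignments the sum over $g\mid y$ would not collapse, and one would be left with a residual over divisors of $y$ that would break the desired scale-free form. The remaining steps are bookkeeping: verifying coprimality conditions, handling the boundary cases $g=1$ and $g=y$ (included automatically), and invoking Proposition~\ref{p:Phi} at the end.
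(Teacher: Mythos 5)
Your proposal is correct and follows essentially the same route as the paper's own proof. Every key step matches: inverting the Fourier transform to extract $\Phi_{N,q}(x)$ times a twisted sum of $\Upsilon(q,a)$, reducing that sum via the progression Cohen identity (Lemma~\ref{l:cohenprogression}) to $\mu(q/g)\tau_{q/g}(x)\tau_g(x-b)$, reparametrizing $q = g q'$ with $g\mid y$, $(q',y)=1$ (which the paper also derives from $(g,q/g)=1\iff(q',y)=1$), observing that $\Phi_{N,q}$ and $\phi(y)/\phi(\ell)$ depend only on $q'$ (and $y$) so the double sum factorizes, telescoping $\sum_{g\mid y}\tau_g(x-b) = y\,\mathbf 1_{y\mid x-b}$ via \eqref{e:smooth}, and finishing with Proposition~\ref{p:Phi}.

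One minor technical remark on your change of variable: passing from $\sum_{r\equiv b\bmod g}\tau_q(x-r)$ to the form in Lemma~\ref{l:cohenprogression} requires either replacing $b$ by $-b$ after the substitution $r\mapsto -r$ and checking that the resulting $\tau_g(x-b)$ is as stated, or else invoking the evenness $\tau_q(-n)=\tau_q(n)$ to rewrite $\tau_q(x-r)=\tau_q(r-x)$ and apply the identity with $x\mapsto -x$. Either reading works, and the outcome $\mu(q/g)\tau_{q/g}(x)\tau_g(x-b)$ you record is correct, so this does not affect the argument.
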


\begin{proof}
For any $ q$, we can calculate as follows.
	\begin{align*}
		&\F ^{-1} \sum_{a \in \mathbb A _q}  L_{N,y}^{a,q }(\xi-\frac{a}{q})
		\\
			& \hspace*{1cm}  = \sum_{\substack{r\in A_q\\r\equiv b \mod{g}}}e (-ar/q)\sum_{a \in \mathbb A _q}
			\int _{\T} \widehat{M}_{N }(\xi -a/q)  \eta_s(\xi- a/q)  \frac{ \phi(y)  } {  \phi (\ell )}  \, e(x \xi) \, d \xi
			\\
			& \hspace*{1cm}  =     \frac{ \phi(y)  } {  \phi (\ell )} \sum_{\substack{r\in A_q\\r\equiv b \mod{g}}} \sum_{a \in \mathbb A _q}  e (a(x-r)/q)
			\int _{\T} \widehat{M}_{N }(  \theta )  \eta_s( \theta ) e (x \theta )  \; d \theta
			\\
			& \hspace*{1cm}  =  \Phi _{N,q} (x)\frac{ \phi(y)  } { \phi (\ell )}   \sum_{\substack{r\in A_q\\r\equiv b \mod{g}}} \tau _q (x-r)
			\\
			& \hspace*{1cm}  =  \Phi _{N,q} (x)\frac{ \phi(y)  } { \phi (\ell )} \mu (q/g) \mathbf 1_{ (g,\frac{q}{g})=1} \tau_{q/g}(x)\tau_g(x-b).
	\end{align*}

A change of variables allows us to pull  the sum over $ a\in \mathbb A _q$ outside the integral.
And, we use  Lemma \ref{l:cohenprogression} in the last line.

 Take $q' = h_y(q) = \ell/y = q/g $. As $(g,q/g)=1$.
Note that $\Phi_{N,q} $ is just a function of $\ell$. It means that $\Phi_{N,q}$ does not depend on $g$ and only depends on $q' $ and $y $. We have
\begin{align*}
\operatorname{Lo}_{N,y,Q} (x)
			& = \sum_{q: h_y(q) < Q} \Phi _{N,q} (x)\frac{ \phi(y)  } { \phi (\ell )} \mu (q/g) \mathbf 1_{ (g,\frac{q}{g})=1} \tau_{q/g}(x)\tau_g(x-b).
\end{align*}
Observe that $\gcd(q/g,g)=1$, if and only if $\gcd(q/g, y)=1$. This obvious but important property makes the condition $1_{(q/g,g)=1}$ independent of $g$, and only depends on $q',y$. So
\begin{align*}
\operatorname{Lo}_{N,y,Q} (x)
			& = \sum_{q' < Q} \sum_{\substack{g|y\\ (g,q') = 1}} \Phi _{N,q} (x)\frac{ \phi(y)  } { \phi (q'y )} \mu (q')  \tau_{q'}(x)\tau_g(x-b)
			\\
			& = \sum_{\substack{q'<Q\\ (y,q') = 1}} \Phi _{N,q'} (x)\frac{ \phi(y)  } { \phi (q')\phi(y )} \mu (q')  \tau_{q'}(x) \sum_{g|y}\tau_g(x-b).
\end{align*}
Next we use  well-known Ramanujan's sum property
\begin{align*}
\sum_{q|r}\tau_q(n) = \begin{cases}
r & r|n\\
0 & \textup{otherwise.}
\end{cases}
\end{align*}
Applying this property gives us
\begin{align*}
\operatorname{Lo}_{N,y,Q} (x)
			& = \sum_{\substack{q'<Q\\ (y,q') = 1}} \Phi _{N,q} (x)\frac{ \mu (q') } { \phi (q' )}   \tau_{q'}(x) y \mathbf 1_{y|x-b}
			\\
			& \ll
			(y \mathbf 1_{y|x-b})\sum_{\substack{q' < Q \\  (q', y) =1}} \frac{\lvert \tau _{q'} (x) \rvert  }{  \phi (q' )}     \eta _N  (x).
	\end{align*}
Hence we have the result.
\end{proof}

We address the fixed scale estimate \eqref{e:LoLess1} here.
We appeal to details in this proof to prove the maximal estimate \eqref{e:LoLess2}.

\begin{proof}[Proof of \eqref{e:LoLess1}]
We of course use \eqref{lowinverse}, together with  H\"older's inequality.  That gives us
\begin{align}
\operatorname {Lo} _{N,y,Q} * \mathbf{1}_F (x)  & =  \sum _{z} \mathbf{1}_F (z) \operatorname {Lo} _{N,y,Q} (x- z)
\\   \label{e:AB}
& \leq  A_N (x) B_N (x),
\end{align} 
where 
\begin{align} 
A_N(x) ^{r'} & =
y \sum _{x \equiv z+b \mod y}
\
\Biggl[ \sum_{\substack{q' < Q \\  (q', y) =1}} \frac{\lvert \tau _{q'} (x-z) \rvert  }{  \phi (q' )}  \Biggr] ^{r'}\eta _{N} (x-z)
\\
 B_N(x) ^{r}  &=
y \sum _{x \equiv z+b \mod y}  \mathbf{1}_F (z) \eta _N (x-z).
\end{align}
We have treated $y \eta _{N} (x - \cdot)$ as a measure, in our inequality above.
The second term satisfies  $ \lVert B_N\rVert_{ \infty }  \ll   [(y/N) \lvert F \rvert  ] ^{1/r} $.
The first term is controlled by Lemma \ref{l:BourgainRamanujan}.  Recalling the familiar lower bound on
the totient function $ \phi (q ) \gg q ^{1- \epsilon }$, we see that   $ \lVert A_N \rVert_{ \infty  } \ll Q ^{ \epsilon } $. That completes the proof.
\end{proof}

\begin{proof}[Proof of \eqref{e:LoLess2}]
We take advantage of \eqref{e:AB} again, along with the fact that we always have $ \lVert A_N \rVert_{\infty } \ll Q ^{ \epsilon } $ and
\begin{align*}
\bigl\lVert \sup_ {N> N _{y,r}}  B_N   \bigr\rVert _{ \ell ^{s}} & \ll \lvert F \rvert   ^{1/s}, 
    && 1<r<s<2,
\end{align*}
by the usual maximal function estimates.
\end{proof}

\subsection{Properties of the High Term}

The first inequality is the fixed scale $ \ell ^2 $ estimate.
\begin{proof}[Proof of \eqref{e:HighLess1}]
This is entirely elementary. By  Plancherel,  it suffices to estimate
\begin{align*}
\lVert  \widehat  {\operatorname {Hi} } _{N,y,Q}\rVert _{L ^{\infty }}
& \leq  \sum_{s \;:\;   2 ^{s-1} \leq N ^{1/10}}
\Bigl\lVert
\sum_{\substack{  q \;:\;    2 ^{s}\leq q < 2 ^{s+1}  \\  h_y (q) >Q  }}
 \sum_{ a \in \mathbb A _q}
\widehat    L_{N,y}^{a,q}
\Bigr\rVert _{L ^{\infty }}
\\
& \ll  \sum_{s \;:\; 2 ^{s+1} > Q } 2 ^{-s (1- \epsilon )} 
\\&\ll Q ^{-1+ \epsilon }.
\end{align*}
We have taken care to define the functions  $ \{\widehat    L_{N,y}^{a,q}   \;:\;  2 ^{s}   \leq q < 2 ^{s+1}\}$
so that they have disjoint support.  That is done by inserting $ \eta (\ell ^2 (\xi -a/q))$ into the definition
of $ L _{N,y} ^{a,q}$ in \eqref{e:LNyq}.
  And the $ L ^{\infty }$ norm of $ \widehat    L_{N,y}^{a,q}  $ is  at most  $  \phi (y)/ \phi ( \ell ) \leq h _{y} (q) ^{-1+ \epsilon } $.
\end{proof}

For the maximal function estimate \eqref{e:HighLess2}, it is typical to apply the Bourgain Multifrequency Maximal
Inequality from \cite{MR1019960}.  Also, in the typical setting, the height of the rationals and the number of rationals are coupled. In the current
setting, this is no longer true.  Following this path would result in an estimate that is logarithmic in $ y$, because of the
estimate \eqref{e:lotsOfRationals}.

Instead, we recall an inequality from \cite{MR1441675}*{Lemma 2.1}.  It requires the multifrequency base points to share a common denominator,
and  the averages be over scales  large relative to the common denominator.   The constant in the maximal inequality is then
independent of the number of base points.

\begin{lemma}\label{l:notBourgain}
Let $ r_1 ,\dotsc, r_J$ be distinct rational points in $ \mathbb T $, with common denominator $ D < 2 ^{d}$.
Then, we have
\begin{equation}\label{e:notBourgain}
\Bigl\lVert
\sup _{n > 2d }  \Bigl\lvert \mathcal F ^{-1} \Bigl\{
\sum _{j=1} ^{J}   \widehat \eta  (2 ^{n} (\theta - r_j)) \widehat f  (\theta )  \Bigr\} (x)
\Bigr\rvert
\Bigr\rVert _{2} \ll  \lVert f\rVert _{2}.
\end{equation}

\end{lemma}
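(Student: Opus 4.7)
The plan is to exploit the common denominator $D$ by lifting to $\ell^2(\Z \times \Z/D\Z)$, where the kernel of $T_n$ factors, up to a geometrically summable error, as a tensor product of a Hardy--Littlewood-type convolution and a Fourier projection.

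First I would write $T_n f = K_n * f$ with $K_n(x) = \phi_n(x)\,\omega(x)$, where $\widehat{\phi_n}(\theta) = \widehat{\eta}(2^n\theta)$ (a Schwartz kernel at spatial scale $2^n$), and
\begin{equation*}
\omega(x) := \sum_{j=1}^{J} e(r_j x).
\end{equation*}
Since $D r_j \in \Z$, the function $\omega$ is $D$-periodic. Then I would apply the isometry $(\Psi f)(y, \alpha) := f(Dy + \alpha)$ mapping $\ell^2(\Z)$ to $\ell^2(\Z \times \Z/D\Z)$, under which $T_n$ becomes convolution on the product group by the lifted kernel $\widetilde{K}_n(z,\gamma) := \phi_n(Dz + \gamma)\,\omega(\gamma)$.

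The crux is the near-tensor decomposition $\widetilde{K}_n(z, \gamma) = \phi_n(Dz)\,\omega(\gamma) + E_n(z, \gamma)$, with $|E_n(z,\gamma)| \lesssim (D/2^n)\,|\phi_n(Dz)|\,|\omega(\gamma)|$ coming from the smoothness of $\phi_n$ at scale $2^n$ together with $|\gamma| < D$; here the hypothesis $n > 2d$, which forces $2^n > D^2$, is essential. The main term is a pure tensor product. The $\gamma$-convolution with $\omega$ on $\Z/D\Z$ is Fourier-diagonal: since $\omega$ is a sum of $J$ \emph{distinct} characters, Plancherel on $\Z/D\Z$ yields $\|\omega *_{\Z/D\Z} h\|_2 \leq D\,\|h\|_2$. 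The $z$-convolution kernel $\rho_n(z) := \phi_n(Dz)$ is an approximate identity at scale $2^n/D \geq 2^d$ with $\|\rho_n\|_{\ell^1(\Z)} \asymp 1/D$, so $\sup_{n > 2d} |\rho_n \ast \,\cdot\,|$ is dominated pointwise by $(C/D)\,M$, where $M$ is the standard Hardy--Littlewood maximal operator on $\Z$. The factors $D$ and $1/D$ cancel exactly, giving $\bigl\|\sup_n |\mathrm{main}_n|\bigr\|_2 \ll \|f\|_2$, independent of both $J$ and $D$.

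Finally, the error $E_n$ is handled by Minkowski's inequality in $n$: each fixed $n$ contributes $\ll (D^2/2^n)\,\|f\|_2$ to $\|E_n \ast \Psi f\|_2$ by the same tensor analysis (using the cruder $|\omega| \leq J \leq D$ in place of Plancherel), and $\sum_{n > 2d} D^2/2^n \leq D^2 \cdot 2^{-2d} \leq 1$ is a geometric tail by $D < 2^d$. The step I expect to be the main obstacle is the precise bookkeeping of this error estimate: one must quantify that the Schwartz decay of $\phi_n$ at scale $2^n$ absorbs both the shift $|\gamma| < D$ and the possibly large $|\omega| \leq J \leq D$ while leaving enough decay in $n$ to be summable, which is exactly what the hypothesis $n > 2d$ enables.
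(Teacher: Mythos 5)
The paper does not prove this lemma; it invokes it directly as \cite{MR1441675}*{Lemma 2.1}, so there is no in-text argument to compare against. Your reconstruction captures the essential mechanism that makes the bound independent of $J$: the kernel factors as $K_n(x) = \phi_n(x)\,\omega(x)$ with $\omega(x)=\sum_j e(r_j x)$ a $D$-periodic sum of $J$ \emph{distinct} characters of $\Z/D\Z$; the $\ell^2(\Z/D\Z)$ operator norm of convolution by $\omega$ equals $\|\widehat\omega\|_\infty = D$ (its Fourier coefficients are $D\cdot\mathbf 1_{\{a_1,\dots,a_J\}}$ where $r_j=a_j/D$), while $\phi_n(D\cdot)$ has $\ell^1(\Z)$ norm $\asymp 1/D$, and these two factors of $D$ and $1/D$ cancel, leaving a single Hardy--Littlewood bound. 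The hypothesis $n>2d$ enters exactly where you say: it makes the spatial scale $2^n/D > D > 1$ of the smooth factor large enough both for the maximal domination and for geometric summability of the error.

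Two points to tighten. First, $\Psi T_n\Psi^{-1}$ is not literally a convolution on $\Z\times\Z/D\Z$: the argument of $\phi_n$ sees the integer difference $\alpha-\beta \in (-D,D)$ rather than its residue class, producing a carry of one $D$-block whenever $\alpha<\beta$. That carry has the same magnitude $O(D)$ as the deviation $|\gamma|<D$ you already consign to the error, so the clean fix is to \emph{define} the main term as $\phi_n(Dz)\,\omega(\gamma)$ and let $E_n$ absorb both the shift and the carry. Second, the pointwise bound $|E_n(z,\gamma)|\lesssim (D/2^n)\,|\phi_n(Dz)|\,|\omega(\gamma)|$ cannot hold literally near zeros of $\phi_n$; the mean value theorem gives instead a bound against $D\,\sup_{|\xi-Dz|\le D}|\phi_n'(\xi)|\cdot|\omega(\gamma)|$, i.e.\ against a fattened Schwartz envelope of $\phi_n$ at the same scale. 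That envelope has $\ell^1(\Z)$ norm $\asymp 2^{-n}$, so after hitting it with the factor $D$ from the $\omega$-convolution one gets $\|E_n\ast g\|_2\ll (D/2^n)\|g\|_2$, summable over $n>2d$ since $D<2^d$ (your more generous $D^2/2^n$ also sums, so this is cosmetic). With these repairs the argument closes as you outline.
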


In our application of this lemma, the number of distinct rational points $a/q$
with $ 2^s < h_y (q/q) \leq 2^{s+1}$ is at most $ Cy^2 2^{2s}$.

\begin{proof}[Proof of \eqref{e:HighLess2}]
In the definition of the High term, we fix $s$ with $2^s >Q/2$, and
consider the maximal function formed over the kernels
\begin{align}
\Gamma _{N,s} =   \sum_{\substack{  q \;:\;    2 ^{s}\leq h_y(q) < 2 ^{s+1} }}
   \sum_{ a \in \mathbb A _q}
  \  L_{N,y}^{a,q} .
\end{align}
The sum above is over at most  $C y^2 2 ^{2s}$ rational points.
A denominator is $g q'$, where $g$ divides $y$ and $q' < 2^{s+1}$.
Their common denominator is then at most $ C y 2^{2s}$.
This means that we can apply  \eqref{e:notBourgain} for the supremum
over $N=2^n > y 2^{2s}$.

Recall that  we only consider $N > N_{y,2} = C y ^{2C}$,  for a large absolute constant $ C$.
For values of $ N_{y,2} \leq  2^n=N < C  y 2^{2s}$, turn to  the fixed scale case, namely \eqref{e:HighLess1},
to conclude that
\begin{equation}
  \big \lVert \sup_{ n \;:\;  N_{y,2} \leq  2^n=N < C  y 2^{2s}}
  \lvert \Gamma_{N,y} \ast f \rvert
  \big \rVert_2  \ll  2^{-s (1- \epsilon)} \lVert f \rVert _2 .
\end{equation}

For the remaining supremum, the definition of $ \Gamma _{N,s}$ needs a slight adjustment in order to
apply \eqref{e:notBourgain}.  Define
\begin{equation*}
\mathcal F \, \tilde \Gamma _{N,s}
=
\sum_{\substack{  q \;:\;    2 ^{s}\leq h_y(q) < 2 ^{s+1} }}
   \sum_{ a \in \mathbb A _q}
   {\Upsilon(q,a)} {\widehat M _{N } }  (\xi- a/q)
\widehat\eta_{ 2 ^{2s}} (\xi- a/q).
\end{equation*}
Here, we have modified the definition of $ L _{N,y} ^{a,q}$ in \eqref{e:LNyq} by
replacing the average ${\widehat M _{N /\ell } } \bigl(\ell (\xi- a/q) \bigr) $ by $ {\widehat M _{N } }  (\xi- a/q) $
and $ \widehat\eta_{\ell^2} (\xi- a/q)$ by $\widehat\eta_{ 2 ^{2s}} (\xi- a/q)$.
With this definition, by a square function argument, we have
\begin{align*}
 \big \lVert \sup_{ n \;:\;  N= 2 ^{n} >  C  y 2^{2s}}
  \lvert (\Gamma_{N,s} - \tilde \Gamma _{N,s} )\ast f \rvert
  \big \rVert_2  ^2
  & \leq
 \sum_{ n \;:\;  N= 2 ^{n} >  C  y 2^{2s}}   \big \lVert
  \lvert (\Gamma_{N,s} - \tilde \Gamma _{N,s} )\ast f \big \rvert
  \rVert_2  ^2
  \\&\ll  2 ^{-2s (1- \epsilon )} \lVert f\rVert_2 ^2 .
\end{align*}
And then, we have a direct application of \eqref{e:notBourgain} to control the supremum below.
\begin{equation*}
\big \lVert \sup_{ n \;:\;  N= 2 ^{n} >  C  y 2^{2s}}
  \lvert  \tilde \Gamma _{N,s} \ast f \rvert
  \big \rVert_2 \ll   2 ^{- s(1- \epsilon )} \lVert f\rVert_2 .
\end{equation*}
We conclude \eqref{e:HighLess2} by summing over $ s $ such that $ 2 ^{s} >Q$.
\end{proof}

\section{Proof of the Main Inequalities} \label{s:proof}

\subsection{The Maximal Function Estimates}

We prove \eqref{e:maximal}. To do so, it suffices to suppose that the function $ f $ on $ \mathbb Z $ is the indicator of
of a set $ F$.   Indeed, we will prove a weak-type estimate for the maximal function.
We need only consider the weak-type estimate at heights $ 0< \lambda  < 1 $.

Fix $ 1< r < 2$, and let $ \epsilon = \frac{r-1}4$.    Below, $ N$ will always be a power of $ 2$.
We trivially have
\begin{align*}
\sum _{N =2 ^{n} < 2 ^{ \lambda ^{-r+1}}} \lVert A _{N,y,b} \mathbf 1_{F}\rVert_1 \ll  \lambda ^{-r+1} \lvert  F\rvert.
\end{align*}
So, we can restrict attention to $ N > 2 ^{ \lambda ^{-r+1}}$.   Importing this condition allows us to take advantage of the maximal inequalities
 \eqref{e:HighLess2} and \eqref{e:LoLess2}, which means we can allow $ Q $ to be as large as
\begin{equation}\label{e:Qbig}
Q  \leq  (\log  2 ^{ \lambda ^{-r+1}}) ^{C r'} = \lambda ^{Cr}.
\end{equation}

Take  $ Q \simeq  \lambda ^{-1+ r/2}$. We will show that for $ N_0 =  \max\{ N _{y,r} , 2 ^{ \lambda ^{-r+1}}\}  $,
\begin{align}  \label{e:Split}
\lvert  \{ \sup_{N > N_0}   A_{N,y,b} \mathbf 1_{F}  > \lambda  \}\rvert
& \ll \{ Q ^{\epsilon } \lambda ^{-r} + Q ^{-2+ \epsilon } \lambda ^{-2} \} \lvert  F\rvert
\\
& \ll \lambda ^{-r+ \epsilon (1-r/2)} \lvert  F\rvert  
\\& \ll \lambda ^{-r+ \epsilon } \lvert  F\rvert .
\end{align}
This proves the restricted weak type estimate $ \ell ^{s,1} \to \ell ^{s, \infty }$, where $ s = r- \epsilon$.
As $ r$ decreases to one, so does $ s$. We deduce the restricted weak type inequality for all $ 1< r < 2$.
Interpolation completes the argument.

Recall our approximation \eqref{e:AHiLo}.  Use the value of $ Q$ above
in the definition of the High and Low terms in \eqref{e:Hi} and \eqref{e:Lo},  respectively.
Then,
\begin{equation*}
A _{N,y} =
{\operatorname {Lo} } _{N,y,Q} +  \operatorname {Hi}  _{N,y,Q} +   E _{N,y,Q}  .
\end{equation*}
Then, by \eqref{e:LoLess2}, we have
\begin{equation*}
\lvert  \{ \sup_{N >  N_0}\lvert   \operatorname {Lo} _{N,y,Q}  \ast  \mathbf 1_{F}\rvert   > \lambda/3  \}\rvert
\ll  Q ^{\epsilon } \lambda ^{-r} \lvert  F\rvert .
\end{equation*}
This is the first half of \eqref{e:Split}.  The estimate below matches the second half of \eqref{e:Split},
and it follows from  \eqref{e:HighLess2}.
\begin{equation*}
\lvert  \{   \sup_{N > N_0} \lvert  \operatorname {Hi} _{N,y,Q}  \ast  \mathbf 1_{F} \rvert  > \lambda /3 \}\rvert
\ll  Q ^{-2 + \epsilon  } \lambda ^{-2} \lvert  F\rvert .
\end{equation*}
Last of all, recalling \eqref{e:Error}, we have
\begin{align*}
\lvert  \{  \sup_{N > N_0} \lvert   \operatorname {E}  _{N,y}  \ast  \mathbf 1_{F}\rvert   > \lambda /3 \}\rvert
& \ll
\lambda ^{-2}
\sum_{N > N_0}
\lVert  \operatorname {E}_{N,y}  \ast  \mathbf 1_{F} \rVert_2 ^2
\\
& \ll  \lambda ^{-2} \operatorname {exp} (- c' \lambda ^{ -(r-1)/2}) \lvert  F\rvert.
\end{align*}
This is better than the second half of \eqref{e:Split}.  So, it completes the proof.

\medskip

\subsection{Fixed Scale Estimates}

We prove the estimate \eqref{fixedscalelpimproving}.
By duality, that estimate is the same as
\begin{equation}\label{e:FSI}
\frac{y}N \langle A _{N,y,b} \mathbf 1_{F},  g\rangle  \ll   \left( \frac{y}{N} \, |F| \right) ^{1/r} \, \left( \frac{y}{N} \lvert G \rvert   \right)^{1/r} ,
\end{equation}
where $ F$ and $G$ are subsets of an interval $ E$ of length $ N$.

Observe that trivially
\begin{align*}
\frac{y}N \langle A _{N,y,b} \mathbf 1_{F},  g\rangle
\ll  \log N   \, \frac{y}{N} \, |F|  \cdot \frac{y}{N} \lvert G \rvert  .
\end{align*}
This implies that the inequality \eqref{e:FSI} is true,  unless
\begin{equation}\label{e:SMALL}
\frac{ y ^{2} } {N ^2   }  \lvert  F\rvert \cdot  \lvert G \rvert     \ll   (\log N) ^{-r'}.
\end{equation}
So it suffices to only study this case.

We take $ N > N _{r,y}$, and $ 0< \epsilon < \frac{r-1}{100}$ small, and apply the High/Low decomposition with parameter $ Q$ to be determined later.
Using the estimates  \eqref{e:HighLess1} and \eqref{e:LoLess1}, we have
\begin{align*}
\frac{y}N \langle   \operatorname{Hi}_{N,y,Q}  \ast  \mathbf 1_{F},  g \rangle
& \ll  Q^{-1+\epsilon} \, \left( \frac{y}{N} \, |F| \right)^{1/2} \, \left( \frac{y}{N} \lvert G \rvert   \right)^{1/2} ,
\\
\frac{y}N (  \operatorname{Lo}_{N,y,Q}  \ast  \mathbf 1_{F},  g)
& \ll  Q^\epsilon \, \left( \frac{y}{N} \, |F| \right)^{1/r} \,  \frac{y}{N}  \lvert G \rvert  .
\end{align*}
Optimize over $ Q$ so that the right hand sides above are approximately equal. We obtain
\begin{equation*}
Q^{1+2\epsilon} \simeq  \left( \frac{y}{N} \, |F| \right)^{\frac{1}{2} - \frac{1}{r}} \, \left( \frac{y}{N}  \lvert G \rvert   \right)^{-\frac{1}{2}}.
\end{equation*}
By \eqref{e:SMALL}, this is an allowed choice for us.

So our estimate becomes
\begin{equation*}
	\frac{y}N \langle A _{N,y,b} \mathbf 1_{F},  g\rangle  \ll
	\left( \frac{y}{N} \, |F| \right)^{ \frac{1}{r} + \epsilon'}
	 \left( \frac{y}{N} \lvert G \rvert   \right)^{ \frac{1}{r} + \epsilon ' }.
\end{equation*}
Above $ \epsilon' < c _{r} \epsilon $.  Thus, we see that \eqref{e:FSI} holds for $1 < r < 2$.

\begin{remark}
The estimate above could be improved to a sparse bound for the maximal function. However, the notion of a sparse bound would have to be refined to one that is adapted to progressions.  Not having a
ready application of such a result, we do not pursue the details herein.
\end{remark}

\begin{bibdiv}
\begin{biblist}

\bib{MR776182}{article}{
      author={Balog, Antal},
      author={Perelli, Alberto},
       title={Exponential sums over primes in an arithmetic progression},
        date={1985},
        ISSN={0002-9939},
     journal={Proc. Amer. Math. Soc.},
      volume={93},
      number={4},
       pages={578\ndash 582},
         url={https://doi.org/10.2307/2045524},
      review={\MR{776182}},
}

\bib{MR937581}{article}{
      author={Bourgain, J.},
       title={On the maximal ergodic theorem for certain subsets of the
  integers},
        date={1988},
        ISSN={0021-2172},
     journal={Israel J. Math.},
      volume={61},
      number={1},
       pages={39\ndash 72},
         url={https://doi.org/10.1007/BF02776301},
      review={\MR{937581}},
}

\bib{MR937582}{article}{
      author={Bourgain, J.},
       title={On the pointwise ergodic theorem on {$L^p$} for arithmetic sets},
        date={1988},
        ISSN={0021-2172},
     journal={Israel J. Math.},
      volume={61},
      number={1},
       pages={73\ndash 84},
         url={https://doi.org/10.1007/BF02776302},
      review={\MR{937582}},
}

\bib{bourgain93}{article}{
      author={Bourgain, J.},
       title={Fourier transform restriction phenomena for certain lattice
  subsets and applications to non-linear evolution equations. {P}art {I}
  {S}chr\"{o}dinger equations},
    language={en},
        date={1993},
     journal={Geom. Funct. Anal},
      volume={3, No. 2},
       pages={107\ndash 156},
      review={\MR{1209299}},
}

\bib{MR1019960}{article}{
      author={Bourgain, Jean},
       title={Pointwise ergodic theorems for arithmetic sets},
        date={1989},
        ISSN={0073-8301},
     journal={Inst. Hautes \'{E}tudes Sci. Publ. Math.},
      number={69},
       pages={5\ndash 45},
         url={http://www.numdam.org/item?id=PMIHES_1989__69__5_0},
        note={With an appendix by the author, Harry Furstenberg, Yitzhak
  Katznelson and Donald S. Ornstein},
      review={\MR{1019960}},
}

\bib{MR3830238}{article}{
      author={Cladek, Laura},
      author={Henriot, Kevin},
      author={Krause, Ben},
      author={\L~aba, Izabella},
      author={Pramanik, Malabika},
       title={A discrete {C}arleson theorem along the primes with a restricted
  supremum},
        date={2018},
        ISSN={0025-5874},
     journal={Math. Z.},
      volume={289},
      number={3-4},
       pages={1033\ndash 1057},
         url={https://doi.org/10.1007/s00209-017-1987-8},
      review={\MR{3830238}},
}

\bib{laceyprimes}{article}{
      author={Han, Rui},
      author={Krause, Ben},
      author={Lacey, Michael~T.},
      author={Yang, Fan},
       title={Averages along the primes: Improving and sparse bounds},
        date={01 Jan. 2020},
     journal={Concrete Operators},
      volume={7},
      number={1},
       pages={45 \ndash  54},
  url={https://www.degruyter.com/view/journals/conop/7/1/article-p45.xml},
}

\bib{MR2188130}{article}{
      author={Ionescu, Alexandru~D.},
      author={Wainger, Stephen},
       title={{$L^p$} boundedness of discrete singular {R}adon transforms},
        date={2006},
        ISSN={0894-0347},
     journal={J. Amer. Math. Soc.},
      volume={19},
      number={2},
       pages={357\ndash 383},
         url={https://doi.org/10.1090/S0894-0347-05-00508-4},
      review={\MR{2188130}},
}

\bib{kesslerlaceymena19lacunary}{article}{
      author={Kesler, Robert},
      author={Lacey, Micheal},
      author={Mena, Dario},
       title={Lacunary discrete spherical maximal functions},
        date={2019},
     journal={New York Journal of Mathematics},
      volume={25},
       pages={541\ndash 557},
}

\bib{MR1441675}{article}{
      author={Lacey, Michael},
       title={On an inequality due to {B}ourgain},
        date={1997},
        ISSN={0019-2082},
     journal={Illinois J. Math.},
      volume={41},
      number={2},
       pages={231\ndash 236},
         url={http://projecteuclid.org/euclid.ijm/1256060833},
      review={\MR{1441675}},
}

\bib{lacey2021endpoint}{article}{
      author={Lacey, Michael~T.},
      author={Mousavi, Hamed},
      author={Rahimi, Yaghoub},
       title={Endpoint $ \ell ^{r}$ improving estimates for prime averages},
        date={2021},
     journal={Math Research Letters, to appear},
      eprint={2101.10401},
}

\bib{MR3299842}{article}{
      author={Mirek, Mariusz},
       title={{$\ell^p(\mathbb{Z})$}-boundedness of discrete maximal functions
  along thin subsets of primes and pointwise ergodic theorems},
        date={2015},
        ISSN={0025-5874},
     journal={Math. Z.},
      volume={279},
      number={1-2},
       pages={27\ndash 59},
         url={https://doi.org/10.1007/s00209-014-1356-9},
      review={\MR{3299842}},
}

\bib{MR3370012}{article}{
      author={Mirek, Mariusz},
      author={Trojan, Bartosz},
       title={Cotlar's ergodic theorem along the prime numbers},
        date={2015},
        ISSN={1069-5869},
     journal={J. Fourier Anal. Appl.},
      volume={21},
      number={4},
       pages={822\ndash 848},
         url={https://doi.org/10.1007/s00041-015-9388-z},
      review={\MR{3370012}},
}

\bib{MR4029173}{article}{
      author={Trojan, Bartosz},
       title={Endpoint estimates for the maximal function over prime numbers},
        date={2019},
        ISSN={1069-5869},
     journal={J. Fourier Anal. Appl.},
      volume={25},
      number={6},
       pages={3123\ndash 3153},
         url={https://doi.org/10.1007/s00041-019-09695-9},
      review={\MR{4029173}},
}

\bib{MR995574}{article}{
      author={Wierdl, M\'{a}t\'{e}},
       title={Pointwise ergodic theorem along the prime numbers},
        date={1988},
        ISSN={0021-2172},
     journal={Israel J. Math.},
      volume={64},
      number={3},
       pages={315\ndash 336 (1989)},
         url={https://doi.org/10.1007/BF02882425},
      review={\MR{995574}},
}

\end{biblist}
\end{bibdiv}
\end{document}